\documentclass[en,oneside,bbfont,article]{amsart} 
\usepackage[lmargin = 30mm, rmargin = 30mm, bmargin = 25mm, tmargin=25mm]{geometry}
\usepackage[utf8]{inputenc}

\usepackage[russian,english]{babel}
\usepackage[OT2, T1]{fontenc}

\usepackage{mathtools}

\providecommand{\examplename}{Example}

\newtheorem{theorem}{Theorem}[section]
\newtheorem{proposition}[theorem]{Proposition}
\newtheorem{corollary}[theorem]{Corollary}
\newtheorem{lemma}[theorem]{Lemma}
\theoremstyle{remark}

\newenvironment{remark}
    {\pushQED{\qed}\remarkx}
    {\popQED\endremarkx}
\theoremstyle{definition}
\newtheorem*{example*}{\protect\examplename}

\newenvironment{example}
    {\pushQED{\qed}\examplex}
    {\popQED\endexamplex}
\theoremstyle{definition}

\newtheorem*{assumption*}{Assumption}

\usepackage{mathtools}
\usepackage{mathrsfs}
\usepackage{pbox}
\usepackage{amssymb}
\usepackage{amstext}
\usepackage{dsfont}
\usepackage{amsthm}
\usepackage{multicol}
\setlength{\columnsep}{0cm}
\usepackage{amsmath}
\usepackage{bm}
\usepackage{comment}
\usepackage{caption,subcaption,enumerate,enumitem}
\usepackage{scalefnt}
\usepackage{float,graphicx,verbatim}
\usepackage{tikz}
\usetikzlibrary{calc,shapes,arrows,positioning,decorations.pathreplacing,calligraphy}
\usepackage{pgfplots}
\pgfplotsset{compat=1.16}
\RequirePackage[colorlinks=true,
	citecolor=blue,urlcolor=blue]{hyperref}

\usepackage{multirow}

\usepackage[backend=biber,style=numeric,citestyle=numeric,maxbibnames=99,sortcites=true]{biblatex}
\renewbibmacro{in:}{}
\addbibresource{referencer.bib}
\appto{\bibsetup}{\sloppy}

\newcommand\N{\mathbb{N}}

\newcommand{\mS}{\mathcal{S}}

\newcommand\R{\mathbb{R}}
\newcommand\E{\mathds{E}}
\newcommand\p{\mathds{P}}
\newcommand\1{\mathds{1}}

\newcommand\da{\downarrow}
\newcommand\ua{\uparrow}

\newcommand{\ve}{\varepsilon}
\newcommand{\cadlag}{{c\`adl\`ag}}

\newcommand\eqd{\overset{d}{=}}

\newcommand\nf[1]{\normalfont{#1}}

\newcommand{\D}{\mathrm{d}}
\newcommand{\ov}[1]{\overline{#1}}

\usepackage[foot]{amsaddr}

\title{H\"older continuity of the convex minorant of a L\'evy process}
\author{Jorge Gonz\'alez C\'azares$^{*\dag}$, David Kramer-Bang$^*$ \& Aleksandar Mijatovi\'c$^{*\dag}$}

\address{$^*$Department of Statistics, University of Warwick, \and $^\dag$The Alan Turing Institute, UK}

\email{jorge.ignacio.gc@gmail.com}
\email{davidbang.privat@gmail.com}
\email{a.mijatovic@warwick.ac.uk}

\begin{document}

\begin{abstract}
We characterise the H\"older continuity of the convex minorant of most L\'evy processes. The proof is based on a novel connection between the path properties of the L\'evy process at zero and the boundedness of the set of $r$-slopes of the convex minorant.
\end{abstract}

\subjclass[2020]{60G51, 60F20, 26A16}

\keywords{Convex minorants, L\'evy processes, H\"{o}lder continuity}

\maketitle

\section{Introduction and main results }
\label{sec:intro}



The H\"{o}lder continuity\footnote{Given $r \in (0,1]$ the function  $f:[0,T] \to \R$ is $r$-H\"{o}lder continuous by definition if its H\"older constant $\sup_{0\le x<y\le T}|f(y)-f(x)|/(y-x)^r$ is finite.} of continuous random functions is a classical topic, analysed extensively for Brownian motion and related processes, see e.g.~\cite{MR3642847} for fractional Brownian motion. Typically, such results make use of Kolmogorov's extension theorem (see~\cite[Thm~3.23]{MR1876169}). The convex minorant of a L\'evy process is a  continuous random function, which may but need not be smooth~\cite{SmoothCM,HowSmoothCM}, motivating the question of its H\"{o}lder continuity. However, as the increments of the convex minorant (and their moments) are not tractable and its local behaviour varies greatly with the characteristics of the L\'evy process~\cite{SmoothCM}, Kolmogorov's extension theorem is not the right tool. In this paper we establish sufficient and necessary conditions for the H\"{o}lder continuity of the convex minorant of a L\'evy process, using a generalisation of the 0--1 law in~\cite[Thm~3.1]{SmoothCM}, the characterisation of small time behaviour of the L\'evy path in~\cite[Thm~2.1]{MR2370602} and an elementary lemma by Khinchine (see Lemma~\ref{lem:Khintchine} below). We prove for example that, in the absence of a Brownian component, the critical H\"{o}lder exponent is the reciprocal of the Blumenthal--Getoor index for most infinite variation L\'evy processes (complete results are given in Table~\ref{tab:Holder} below). A short \href{https://youtu.be/PKvSg2tKqfs}{YouTube}~\cite{Presentation_Hoelder} video describes the results and the structure of our proofs. 


The convex minorant $C$ of the path of the L\'evy process $X$ on $[0,T]$ is the pointwise largest convex function dominated by $X$.   By~\cite[Prop.~1.3]{SmoothCM}, $C$ is Lipschitz (i.e., $1$-H\"older) continuous if and only if $X$ is of finite variation. In what follows we assume that $X$ is of infinite variation. Then $C$ is not Lipschitz on $[0,T]$ but, by convexity, is Lipschitz on every interval $[\ve,T-\ve]$, $\ve>0$, with Lipschitz constant $\max\{|C'_\ve|,C'_{T-\ve}\}$ given in terms of the right-derivative $C'$ of $C$. Note that the rate at which $\max\{|C'_\ve|,C'_{T-\ve}\}$ tends to infinity as $\ve\da 0$, analysed in~\cite{HowSmoothCM}, is insufficient to characterise the $r$-H\"older continuity of $C$ on $[0,T]$ for $r\in(0,1)$, since $C'_\ve$ may fluctuate between functions that are not asymptotically equivalent as $\ve\da 0$, see~\cite[Rem.~2.14(a)]{HowSmoothCM} (analogous behaviour is observed for $C'_{T-\ve}$). 

Let $\sigma$ and $\nu$ be the Gaussian coefficient and L\'evy measure of $X$, respectively (see~\cite{MR3185174} for background on L\'evy processes). Let $\beta$ denote the Blumenthal--Getoor index of $X$ given by
\begin{equation}\label{eq:BG}
\beta
\coloneqq\inf\big\{p>0\,:\,J_{p}<\infty\big\}, 
\quad \text{where} \quad 
J_p\coloneqq \int_{(-1,1)}|x|^{p}\nu(\D x).
\end{equation} 
Note that, when $\sigma=0$, $\beta\in[1,2]$ since $X$ is of infinite variation. (If $\sigma>0$, we may have $\beta\in[0,1)$.) Since $J_2<\infty$, we have  $\ov\sigma^2(u)\coloneqq\int_{(-u,u)}x^2\nu(\D x)<\infty$ for $u>0$ and $\ov\sigma^2(u)\to0$ as $u\to0$ (but the convergence may be arbitrarily slow). Define also
\begin{equation}
\label{eq:lambda}
\lambda_2\coloneqq\inf\bigg\{\lambda>0:\int_0^1e^{-\lambda^2/(2\ov\sigma^2(u))}\frac{\D u}{u}<\infty\bigg\}
\quad\&\quad
I_2
\coloneqq\int_0^1\E[\min\{(X_t/\sqrt{t})^2,1\}]\frac{\D t}{t}.
\end{equation}

Theorem~\ref{thm:main_theorem}, summarised in Table~\ref{tab:Holder} below, characterises, for most L\'evy process of infinite variation, the set of $r\in(0,1)$ for which $C$ is $r$-H\"older continuous. The characterisation is given in terms of $J_\beta$, except for $\beta=2$, where $\lambda_2$ and $I_2$ are required.

\begin{table}[ht]
\begin{center}
\begin{tabular}{|c|c|c|c|c|}
\hline
\multicolumn{3}{|c|}{Infinite variation L\'evy process $X$} 
& $r\in(0,1)$ 
& Is $C$ $r$-H\"older continuous?\\
\hline
\hline
\multicolumn{3}{|c|}{\multirow{2}{*}{$\sigma^2>0$}}
& $r<1/2$
& Yes\\
\cline{4-5}
\multicolumn{3}{|c|}{}
& $r\ge 1/2$
& No\\
\hline
\multirow{12}{*}{$\sigma^2=0$}
& \multicolumn{2}{|c|}{$\beta=1$}
& $r\in(0,1)$
& Yes\\
\cline{2-5}
& \multirow{4}{*}{$\beta\in(1,2)$}
& \multirow{2}{*}{$J_{\beta}=\infty$}
& $r<1/\beta$ 
& Yes\\
\cline{4-5}
& & & $r\ge 1/\beta$ 
& No\\
\cline{3-5}
& 
& \multirow{2}{*}{$J_{\beta}<\infty$}
& $r\le1/\beta$
& Yes\\
\cline{4-5}
& & & $r>1/\beta$ 
& No\\
\cline{2-5}
& \multirow{7}{*}{$\beta=2$}
& \multirow{2}{*}{$\lambda_2=\infty$}
& $r<1/2$ 
& Yes\\
\cline{4-5}
& & & $r\ge1/2$ 
& No\\
\cline{3-5}
& 
& \multirow{2}{*}{$I_2<\infty$}
& $r\le 1/2$
& Yes\\
\cline{4-5}
& & & $r>1/2$ 
& No\\
\cline{3-5}
& & \multirow{3}{*}{\pbox{4.25cm}{\centering $\lambda_2<\infty$ and
$I_2=\infty$}}
& $r<1/2$ 
& Yes\\
\cline{4-5}
& & & $r>1/2$ 
& No\\
\cline{4-5}
& & & $r=1/2$ 
& Inconclusive\\
\hline
\end{tabular}
\end{center}
\caption{\label{tab:Holder}
The critical level $r\in(0,1)$ for $r$-H\"older continuity of the convex minorant is $1/2$ in the presence of a Brownian component and $1/\beta$ in its absence, where $\beta\in[1,2]$ is the Blumenthal--Getoor index.}
\end{table}

\subsection{\texorpdfstring{$r$}{r}-H\"older continuity and sets of \texorpdfstring{$r$}{r}-slopes}
\label{subsec:main}

The convex minorant $C$ is piecewise linear with countably many maximal intervals of linearity (see, e.g.~\cite{fluctuation_levy}). Denote the corresponding sequences of horizontal lengths and vertical heights by $(\ell_n)_{n\in\N}$ and $(\xi_n)_{n\in\N}$, respectively. Thus, over the $n$-th interval of linearity (where $C$ has slope $\xi_n/\ell_n$), $C$ is clearly $r$-H\"older with H\"older constant $|\xi_n|/\ell_n^r$. Our main objective is to characterise when a L\'evy processes has a convex minorant that is $r$-H\"older continuous for $r\in(0,1)$. It turns out that, for a large class of L\'evy processes, the a.s. finiteness of $k_r\coloneqq\sup_{n\in\N}|\xi_n|/\ell_n^r$ implies that $C$ is $r$-H\"older a.s. It is important to note that neither $0$ nor $T$ are the endpoints of an interval of linearity of $C$ since $X$ is of infinite variation~\cite[Sec.~1.1.2]{SmoothCM}, implying that, even though $C$ is always ``locally $r$-H\"older'' on $(0,T)$ by convexity (i.e. $r$-H\"older on every compact subinterval of $(0,T)$), it may fail to be $r$-H\"older on $[0,T]$.

For any $r\in(0,1)$, define the set of $r$-slopes by $\mS_r\coloneqq\{\xi_n/\ell_n^r:n\in\N\}$, which is either a.s. bounded ($k_r<\infty$) or a.s. unbounded ($k_r=\infty$) by~\cite[Cor.~3.2]{SmoothCM}. 
By Lemma~\ref{lem:PC-Holder} below, we have:
\begin{equation}
\label{eq:Holder-K_r}
k_r
=\sup_{s\in\mS_r}|s|
\le\sup_{0\le u<t\le T}\frac{|C_t-C_u|}{(t-u)^r}
\le\bigg(\sum_{s \in \mS_r} |s|^{1/(1-r)}\bigg)^{1-r}
\eqqcolon K_r
\quad\text{a.s.}
\end{equation}
Note that the upper bound $K_r$ on the $r$-H\"older constant in~\eqref{eq:Holder-K_r} is in fact the $L^{p}$-norm of $C'$ for $p=1/(1-r)$. Furthermore, the first inequality in~\eqref{eq:Holder-K_r} may be strict, see Example~\ref{ex:Holder-K_r-strict} below.

The utility of~\eqref{eq:Holder-K_r} lies in the fact that it controls the H\"older continuity of convex minorant $C$, since $C$ is $r$-H\"older if $K_r<\infty$ and it is \emph{not} $r$-H\"older if $k_r=\infty$. Our main result, Theorem~\ref{thm:main_theorem}, shows that, for all L\'evy processes and $r\in (0,1)\setminus\{1/2\}$ (and even $r=1/2$ if either $\sigma^2>0$ or $\beta\in[1,2)$), $k_r$ and $K_r$ are simultaneously finite or infinite: $\p(\{K_r=\infty\}\cap\{k_r<\infty\})=0$, yielding Table~\ref{tab:Holder}. Since, by Proposition~\ref{prop:SB-sum} below, for any L\'evy process $X$ and any $r\in(0,1)$, we have $\p(k_r=\infty)\in\{0,1\}$ and $\p(K_r=\infty)\in\{0,1\}$, the main function of Theorem~\ref{thm:main_theorem} is thus to rule out the possibility of having $k_r<\infty=K_r$ a.s. Only in the extreme case $\sigma^2=0$, $\beta=2$ and $r=1/2$, does there exist a L\'evy processes for which $k_r<\infty=K_r$ a.s., making~\eqref{eq:Holder-K_r} inconclusive in determining if $C$ is $\tfrac{1}{2}$-H\"older, see Proposition~\ref{prop:b=2} and Example~\ref{ex:b=2} below.

\begin{theorem}\label{thm:main_theorem}
Let $X$ be a L\'evy process of infinite variation and $r\in(0,1)$.
\begin{itemize}
\item[{\nf{(i)}}] If $\sigma^2>0$, then $k_{r}=\infty$ a.s. for $r\in[1/2,1)$ and $K_r<\infty$ a.s. for $r\in(0,1/2)$. 
\item[{\nf{(ii)}}] If $\sigma^2=0$ and $\beta\in[1,2)$, then the following equivalences hold: 
$$k_{r}<\infty\text{ a.s. }\iff J_{1/r}<\infty\iff K_{r}<\infty\text{ a.s.}$$
\item[{\nf{(iii)}}] If $\sigma^2=0$ and $\beta=2$, then $k_{r}=\infty$ a.s. for $r\in(1/2,1)$ and $K_r<\infty$ a.s. for $r\in(0,1/2)$.
If $r=1/2$, the following implications hold:\\ 
\nf{(a)} $\lambda_2=\infty\implies k_{1/2}=\infty$ a.s., \qquad\,\,\nf{(b)}
$\lambda_2\in(0,\infty)\implies k_{1/2}<\infty=K_{1/2}$ a.s.,\\  \nf{(c)} $\lambda_2=0\implies k_{1/2}<\infty$ a.s.,  \qquad\quad\nf{(d)} $I_2<\infty\implies K_{1/2}<\infty$ a.s.
\end{itemize}
\end{theorem}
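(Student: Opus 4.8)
\emph{Proof plan.} The strategy is to reduce the statement to two deterministic integral tests and verify them case by case via Khinchine's lemma (Lemma~\ref{lem:Khintchine}) and the small-time estimates of~\cite{MR2370602}. Recall the stick-breaking description of the faces of $C$ (see~\cite{fluctuation_levy,SmoothCM}): the lengths $(\ell_n)_{n\in\N}$ form a stick-breaking sequence on $[0,T]$, so $-\log\ell_n$ has a $\mathrm{Gamma}(n,1)$ law and $\E\sum_n h(\ell_n)=\int_0^1 h(u)\,\D u/u$ for every measurable $h\ge0$; and, conditionally on $(\ell_n)_{n\in\N}$, the heights $(\xi_n)_{n\in\N}$ are independent with $\xi_n\eqd X_{\ell_n}$. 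Combining this with the conditional Borel--Cantelli lemma (for $k_r$), the a.s.-convergence criterion for sums of independent nonnegative variables (for $K_r$) and the generalised $0$--$1$ law (Proposition~\ref{prop:SB-sum}), one obtains, for each $r\in(0,1)$: $k_r<\infty$ a.s.\ iff $\int_0^1\p(|X_t|>ct^r)\,\D t/t<\infty$ for all large $c>0$, and $K_r<\infty$ a.s.\ iff $\int_0^1\E[\min\{(|X_t|/t^r)^{1/(1-r)},1\}]\,\D t/t<\infty$; in particular $K_{1/2}<\infty$ a.s.\ iff $I_2<\infty$, which is~(d). Since $k_r\le K_r$ a.s.\ and the finiteness of the first integral is monotone in $r$, it remains to estimate the integrands and examine the critical exponents.

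\emph{Two recurrent estimates.} Khinchine's lemma controls $\p(|X_t|>\tau)$ two-sidedly: after deleting jumps above $\tau$ and subtracting a recentering that is $\oh(\tau)$ as $t\da0$ when $\beta\le2$ and $\tau\gtrsim t^{1/2}$ (since then $t\int_{\tau<|x|<1}|x|\,\nu(\D x)\le tJ_2/\tau=\oh(\tau)$), one gets $\p(|X_t|>\tau)\asymp t\,\nu(\{|x|>\tau\})+\exp(-c\tau^2/(t\ov\sigma^2(\tau)))$, with a matching lower bound in the Gaussian range $\tau^2\lesssim t\ov\sigma^2(\tau)$. With $\tau=ct^r$, Fubini gives the big-jump integral $\int_0^1 t\,\nu(\{|x|>ct^r\})\,\D t/t\asymp\int_{(-1,1)}(|x|\wedge c)^{1/r}\nu(\D x)$, which up to a finite term is $\asymp J_{1/r}$, and the diffusive integral $\int_0^1 t^{1-2r}\ov\sigma^2(ct^r)\,\D t/t\asymp\int_{(-1,1)}x^2\big(\int_{|x|}^1u^{1/r-3}\D u\big)\nu(\D x)$, which is $\lesssim J_2$ for $r<1/2$, $\lesssim J_2+J_{1/r}$ for $r>1/2$, and $=\int_{(-1,1)}x^2\log(1/|x|)\,\nu(\D x)$ for $r=1/2$ (finite when $\beta<2$).

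\emph{The non-critical cases.} (I) If $r<1/2$ then $1/r>2\ge\beta$ forces $J_{1/r}<\infty$, and the diffusive contribution is finite (a Gaussian term $\asymp\int_0^1 t^{(1-2r)/(2(1-r))}\,\D t/t$ when $\sigma^2>0$, the $\ov\sigma^2$-term above otherwise), so the $K_r$-integral converges: $K_r<\infty$ a.s., which is the ``Yes, $r<1/2$'' entry of Table~\ref{tab:Holder} throughout and the second halves of~(i) and~(iii). (II) If $\sigma^2>0$ and $r\ge1/2$, write $X=\sigma B+Y$ with $B\perp Y$; conditioning on $Y_t$ and using Gaussian anti-concentration, $\p(|X_t|>c\sqrt t)=\E[\p(|N+Y_t/(\sigma\sqrt t)|>c/\sigma\mid Y_t)]\ge\p(N>c/\sigma)>0$ uniformly in $t>0$, so the $k_{1/2}$-integral diverges, $k_{1/2}=\infty$ a.s., and hence $k_r=\infty$ a.s.\ for all $r\ge1/2$. (III) If $\sigma^2=0$ and $\beta<2$: when $J_{1/r}<\infty$, the $K_r$-test reduces via Khinchine to $J_{1/r}$ plus the finite diffusive term, so $K_r<\infty$ a.s.\ (hence $k_r<\infty$ a.s.); when $J_{1/r}=\infty$, the big-jump lower bound, combined with~\cite[Thm~2.1]{MR2370602} (which shows the residual part of $X_t$ is negligible against $t^r$, or else $\p(|X_t|>ct^r)\to1$ directly), makes the $k_r$-integral diverge, so $k_r=\infty$ a.s.; together these give the triple equivalence~(ii).

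\emph{The critical regime $\sigma^2=0$, $\beta=2$, and the main obstacle.} For $r>1/2$ one has $1/r<2=\beta$, so $J_{1/r}=\infty$ and $k_r=\infty$ a.s.\ as in~(III). For $r=1/2$, the diffusive part of the $k_{1/2}$-test is, up to constants, $\int_0^1\exp(-c^2/(C\ov\sigma^2(c\sqrt t)))\,\D t/t$, which by the definition of $\lambda_2$ is finite for all large $c$ when $\lambda_2<\infty$ and infinite for every $c$ when $\lambda_2=\infty$; with the always-finite big-jump term this gives~(a),~(c) and the ``$k_{1/2}<\infty$'' half of~(b). For the ``$K_{1/2}=\infty$'' half of~(b), use $\E[\min\{(X_t/\sqrt t)^2,1\}]\gtrsim\ov\sigma^2(\sqrt t)$ (no jump above $\sqrt t$ occurs with probability bounded away from $0$, and the centred small-jump part has variance $\asymp t\ov\sigma^2(\sqrt t)$) to get $I_2\gtrsim\int_0^1\ov\sigma^2(u)\,\D u/u$; minimising $w\mapsto w^{-1}e^{\lambda^2w/2}$ yields $\ov\sigma^2(u)\ge\tfrac{e\lambda^2}{2}e^{-\lambda^2/(2\ov\sigma^2(u))}$, so $I_2=\infty$ whenever $\lambda_2>0$, whence $K_{1/2}=\infty$ a.s. The main obstacle sits in this regime: making Khinchine's bounds sharp enough that the diffusive part of $\p(|X_t|>c\sqrt t)$ is genuinely comparable to $\exp(-c^2/(C\ov\sigma^2(c\sqrt t)))$ — both the Gaussian-range lower bound and the control of the recentering of an infinite-variation process, which is $\oh(\sqrt t)$ precisely because $r\le1/2$ and would break down for $r>1/2$ — and identifying the $K_{1/2}$-test on the nose with $I_2$ rather than only up to one-sided bounds, so that~(b)--(d) close the $\beta=2$ row of Table~\ref{tab:Holder} without a gap. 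Everything else reduces to the two Fubini computations and the anti-concentration estimate above.
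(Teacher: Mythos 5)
Your reduction to the two integral tests --- $k_r<\infty$ a.s.\ iff $\int_0^1\p(|X_t|>ct^r)\,t^{-1}\D t<\infty$ for some (equivalently, all large) $c>0$, and $K_r<\infty$ a.s.\ iff $\int_0^1\E[\min\{(|X_t|/t^r)^{1/(1-r)},1\}]\,t^{-1}\D t<\infty$ --- is exactly the paper's Corollary~\ref{cor:Khintchine} and Proposition~\ref{prop:SB-sum}, it yields (iii-d) immediately, and your Gaussian anti-concentration argument for $\sigma^2>0$, $r\ge1/2$ is a legitimate (indeed more elementary) substitute for the law of the iterated logarithm. The problems start with your ``two recurrent estimates''. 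The two-sided bound $\p(|X_t|>\tau)\asymp t\ov\nu(\tau)+\exp(-c\tau^2/(t\ov\sigma^2(\tau)))$ is \emph{not} a consequence of Lemma~\ref{lem:Khintchine}, which only converts integral tests into statements about $\limsup_{t\da0}|X_t|/h(t)$ and back; it is essentially the technical core of~\cite[Thm~2.1]{MR2370602}, which the paper deliberately quotes as a black box because the matching lower bound in the Gaussian range and the constant tracking needed to identify the threshold with $\lambda_2$ in (iii-a)--(iii-c) are genuinely delicate. Moreover your own recentering control $tJ_2/\tau=\oh(\tau)$ fails for $\tau\asymp t^r$ with $r>1/2$, which is precisely the regime $\beta\in(1,2)$ needed for part (ii); you flag this at the end but do not resolve it. Asserting this estimate is the main missing idea: without it the divergence directions of (ii) and cases (iii-a), (iii-c) do not close. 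The paper needs only the qualitative dichotomy for $\limsup_{t\da0}|X_t|/t^r$ from \cite[Props.~47.11 and 47.24]{MR3185174} together with \cite[Thm~2.1]{MR2370602}.

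There are three further concrete gaps. First, in the subcritical regime your ``diffusive term'' $\int_0^1 t^{1-2r}\ov\sigma^2(ct^r)\,t^{-1}\D t$ is a second-moment bound, and $\min\{y,1\}^{1/(1-r)}\le\min\{y,1\}^2$ holds only when $1/(1-r)\ge2$, i.e.\ $r\ge1/2$; for $r<1/2$ (and generally $r<1/\beta$) the inequality reverses, so your verification of the $K_r$-test is not an upper bound on the integrand. The paper's missing device is~\eqref{eq:r_r'}: $K_r^{1/(1-r)}\le k_{r'}^{1/(1-r)}\sum_{n}\ell_n^{(r'-r)/(1-r)}<\infty$ whenever $k_{r'}<\infty$ for some $r'\in(r,1/\beta)$, which settles the whole subcritical regime with no moment computation at all. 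Second, for $\beta\in(1,2)$ and $r=1/\beta>1/2$ your $K_r$-test omits the compensator contribution $t^2(\gamma-\varpi(t^r))^2$ with $\varpi(u)=\int_{(-1,1)\setminus(-u,u)}x\,\nu(\D x)$, which blows up as $t\da0$ and requires the separate Fubini argument in the proof of Proposition~\ref{prop:suff_1/b} (its step (ii)). Third, in (iii-b) your route to $K_{1/2}=\infty$ rests on $\E[\min\{(X_t/\sqrt t)^2,1\}]\gtrsim\ov\sigma^2(\sqrt t)$, which does not follow from ``the centred small-jump part has variance $\asymp t\ov\sigma^2(\sqrt t)$'': the natural estimate $\E[\min\{Y^2,1\}]\ge\E[Y^2]-\E[Y^4]$ is destroyed by first-order cancellation since $\E[Y^4]\ge\E[Y^2]$ is possible here. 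The paper bypasses $I_2$ entirely: if $\lambda_2>0$, Lemma~\ref{lem:Khintchine}(i) (in contrapositive) forces $\int_0^1\p(|X_t|/\sqrt t>R)t^{-1}\D t=\infty$ for $R<\lambda_2/4$, so by \cite[Cor.~3.2]{SmoothCM} infinitely many $\tfrac12$-slopes have magnitude at least $\ve$, and $K_{1/2}^2=\sum_n\xi_n^2/\ell_n=\infty$ trivially. Your lower bound on $I_2$ may well be true, but it needs a proof, and it is not the efficient route.
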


We note that the \emph{only} inconclusive cases in Theorem~\ref{thm:main_theorem} are (iii-b) and (iii-c). In fact, Example~\ref{ex:b=2} below presents a L\'evy process as in Theorem~\ref{thm:main_theorem}(iii-b). The proof of Theorem~\ref{thm:main_theorem} rests on Propositions~\ref{prop:not-1/b}--\ref{prop:b=2}, which we introduce next.

\begin{proposition}
\label{prop:not-1/b}
Let $X$ be a L\'evy process of infinite variation. If $\sigma^2>0$, then $k_{r}=\infty$ a.s. for $r\in[1/2,1)$ and $K_r<\infty$ a.s. for $r\in(0,1/2)$. If $\sigma^2=0$, then $k_{r}=\infty$ a.s. for $r\in(1/\beta,1)$ and $K_r<\infty$ a.s. for $r\in(0,1/\beta)$.
\end{proposition}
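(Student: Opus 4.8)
\emph{Proof plan.} The plan is to translate the a.s.\ finiteness of $k_r$ and of $K_r$ into small-time integral tests on the law of $X$, and then to evaluate these tests in each of the four regimes. By the stick-breaking description of the convex minorant (see \cite{fluctuation_levy,SmoothCM}), $(\ell_n,\xi_n)_{n\in\N}\eqd(\ell_n,\wt X^{(n)}_{\ell_n})_{n\in\N}$, where $(\ell_n)$ are the stick-breaking lengths of $[0,T]$ and $\wt X^{(1)},\wt X^{(2)},\dots$ are i.i.d.\ copies of $X$ independent of $(\ell_n)$; thus $k_r\eqd\sup_n|\wt X^{(n)}_{\ell_n}|/\ell_n^r$ and $K_r^{1/(1-r)}\eqd\sum_n|\wt X^{(n)}_{\ell_n}|^{1/(1-r)}/\ell_n^{r/(1-r)}$. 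Conditioning on $(\ell_n)$ and applying Khinchine's lemma (Lemma~\ref{lem:Khintchine}) to the supremum and the Kolmogorov three-series theorem to the sum, then Proposition~\ref{prop:SB-sum} to pass from a stick-breaking sum $\sum_n h(\ell_n)$ to $\int_0^1 h(u)\,\D u/u$, this reduces the claim to two integral tests: (a) $k_r<\infty$ a.s.\ if and only if $\int_0^1\p(|X_u|>\lambda u^r)\,\D u/u<\infty$ for some $\lambda>0$; (b) $K_r<\infty$ a.s.\ if and only if $\int_0^1\E[\min\{(|X_u|/u^r)^{1/(1-r)},1\}]\,\D u/u<\infty$.

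\emph{Divergence cases ($k_r=\infty$ a.s.).} If $\sigma^2>0$, write $X_u=\sigma B_u+W_u$ with $B$ a standard Brownian motion independent of the drift-and-jump part $W$, and exploit that a nondegenerate centred Gaussian is tail-stable under deterministic shifts, i.e.\ $\p(|N(0,\sigma^2)+z|>\lambda)\ge\p(N(0,\sigma^2)>\lambda)=:c_0(\lambda)>0$ for every $z\in\R$. Conditioning on $W_u$ gives $\p(|X_u|>\lambda u^r)\ge\p(|X_u|>\lambda\sqrt u)\ge c_0(\lambda)$ for $u\le1$ whenever $r\ge1/2$, so the test (a) diverges for every $\lambda$ and $k_r=\infty$ a.s. If $\sigma^2=0$ and $r\in(1/\beta,1)$, then $1/r<\beta$ and hence $J_{1/r}=\infty$; I would invoke the small-time characterisation \cite[Thm~2.1]{MR2370602}, which yields $\limsup_{t\da0}|X_t|/t^r=\infty$ a.s., equivalently the divergence of the test (a) for every $\lambda$, whence $k_r=\infty$ a.s. (A more self-contained route is a one-big-jump lower bound, $\p(|X_u|>\lambda u^r)\gtrsim u\,\nu(|x|>c\lambda u^r)$ on an appropriate range of $u$, whose $\D u/u$-integral dominates a positive multiple of $\int_{|x|<1}|x|^{1/r}\,\nu(\D x)=J_{1/r}=\infty$.)

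\emph{Convergence cases ($K_r<\infty$ a.s.).} Set $p\coloneqq1/(1-r)$. When $r<1/2$ — which covers $\sigma^2>0$ with $r\in(0,1/2)$ as well as $\sigma^2=0$ with $r<1/2$ — we have $p\in(1,2)$ and a fixed truncation of $X$ at level $1$ suffices: if $X^\flat$ denotes $X$ with its jumps of modulus $>1$ removed, then $\E|X^\flat_u|^p\le Cu^{p/2}$ for $u\le1$ (applying Jensen to the $L^2$ compensated-small-jump martingale, since $p\le2$), so $\E[\min\{(|X_u|/u^r)^p,1\}]\le Cu^{p(1/2-r)}+u\,\nu(|x|>1)$, which is $\D u/u$-integrable near $0$ since $1/2-r>0$; hence (b) holds. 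When $\sigma^2=0$ and $r\in[1/2,1/\beta)$ — which forces $\beta<2$ and $p\ge2$ — the decisive point is that $r<1/\beta$ permits the choice of an exponent $q\in(\beta,1/r)\subset(1,2)$; then $J_q<\infty$, $rq<1$, and $\ov\sigma^2(v)\le v^{2-q}J_q$ for $v\le1$. Truncating $X$ at the $u$-dependent level $u^r$, write $X_u=c_uu+L^{(u)}_u+N^{(u)}_u$ with $L^{(u)}$ the compensated sum of jumps of modulus $\le u^r$, $N^{(u)}$ the (uncompensated) jumps of modulus $>u^r$, and $c_u$ the corresponding drift, and estimate the three contributions separately: $\p(N^{(u)}_u\ne0)\le u\,\nu(|x|>u^r)$, whose $\D u/u$-integral equals $\int_{|x|<1}|x|^{1/r}\nu(\D x)+\nu(|x|\ge1)<\infty$; the drift term is controlled by $|c_u|\le|\gamma|+(u^r)^{1-q}J_q$; and $u^{-rp}\E|L^{(u)}_u|^p$ is bounded using Jensen (if $p=2$) or a Rosenthal-type moment inequality for martingales with jumps $\le u^r$ (if $p>2$), after substituting $\ov\sigma^2(u^r)\le(u^r)^{2-q}J_q$ and $\int_{|x|\le u^r}|x|^p\nu(\D x)\le(u^r)^{p-q}J_q$. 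Each resulting bound carries a strictly positive power of $u$ (because $rq<1$), hence is $\D u/u$-integrable, and (b) holds, so $K_r<\infty$ a.s.

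\emph{Main obstacle.} The technically heaviest point is the $p$-th moment bound for the compensated small-jump martingale $L^{(u)}$ in the regime $\sigma^2=0$, $r>1/2$ (where $p>2$): it requires pairing the appropriate martingale moment inequality with the substitution $\ov\sigma^2(v)\le v^{2-q}J_q$, the admissibility of $q\in(\beta,1/r)$ being precisely the arithmetic consequence of $r<1/\beta$ that keeps all exponents positive. Conceptually, the only inputs that are not elementary are the stick-breaking reduction to the integral tests (a)--(b) (Lemma~\ref{lem:Khintchine} and Proposition~\ref{prop:SB-sum}) and, for $\sigma^2=0$ with $r>1/\beta$, the small-time dichotomy of \cite[Thm~2.1]{MR2370602}.
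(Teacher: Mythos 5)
Your argument is correct, but it reaches the conclusion by a genuinely different route from the paper, most notably for the convergence half. The paper's proof of $K_r<\infty$ is a two-line reduction: since $|\xi_n|/\ell_n^r\le k_{r'}\ell_n^{r'-r}$ for any $r'>r$, one gets $K_r^{1/(1-r)}\le k_{r'}^{1/(1-r)}\sum_{n}\ell_n^{(r'-r)/(1-r)}$ (the inequality~\eqref{eq:r_r'}), and $\sum_n\ell_n^{p}<\infty$ a.s.\ for every $p>0$; so $K_r<\infty$ follows from $k_{r'}<\infty$ for some $r'$ strictly between $r$ and the critical exponent, which in turn is read off from $\limsup_{t\da0}|X_t|/t^{r'}=0$ (the law of the iterated logarithm when $\sigma^2>0$, and \cite[Prop.~47.24]{MR3185174} when $\sigma^2=0$) via Corollary~\ref{cor:Khintchine}. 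You instead verify the integral criterion of Proposition~\ref{prop:SB-sum} head-on, with a fixed truncation plus Jensen when $p=1/(1-r)\le 2$ and a $u$-dependent truncation at level $u^r$ plus a Rosenthal-type martingale inequality when $p>2$; the choice $q\in(\beta,1/r)$ does make all exponents strictly positive, so the computation closes. This is substantially heavier than the paper's bridging trick, but it is sound and yields quantitative bounds; it essentially re-proves the relevant part of \cite[Prop.~47.24]{MR3185174} by hand. For the divergence half the two proofs use comparable inputs: the paper goes through the a.s.\ limit $\limsup_{t\da0}|X_t|/t^r$, while you show divergence of the integral test directly — your Gaussian anti-concentration bound $\p(|\sigma B_u+z|>\lambda\sqrt{u})\ge\p(N(0,\sigma^2)>\lambda)$ is a pleasantly elementary substitute for the LIL, and for $\sigma^2=0$ you lean on \cite[Thm~2.1]{MR2370602} just as the paper leans on Sato. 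One caveat: your parenthetical ``one-big-jump'' alternative for the case $\sigma^2=0$, $r>1/\beta$ is not as routine as stated, because the lower bound $\p(|X_u|>\lambda u^r)\gtrsim u\,\ov\nu(c\lambda u^r)$ needs the remainder process (big jump removed) to stay below $\lambda u^r$ with probability bounded away from zero, whereas its typical size $u^{1/\beta}$ dominates $u^r$ in this regime; since you only offer this as an optional replacement for a valid citation, it is not a gap in the proof, but it should not be presented as immediate. Finally, the appeal to the Kolmogorov three-series theorem in the reduction is redundant: Proposition~\ref{prop:SB-sum} (via \cite[Thm~3.1]{SmoothCM}) already delivers the equivalence you need.
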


By the inequalities in~\eqref{eq:Holder-K_r}, Proposition~\ref{prop:not-1/b} characterises H\"older continuity of $C$ when either $\beta=1$ or $\sigma^2>0$, implying the rows one, two and three in Table~\ref{tab:Holder}. Moreover, Proposition~\ref{prop:not-1/b} reveals that the critical level of the H\"older exponent is $r=1/\beta$ with $\beta\in(1,2]$, considered next. Define
\begin{equation}\label{eq:I_b}
I_\beta\coloneqq\int_0^1\E\big[\min\{(|X_t|/t^{1/\beta})^{\beta/(\beta-1)},1\}\big]\frac{\D t}{t}\in(0,\infty],
\qquad
\beta\in(1,2].
\end{equation}

\begin{proposition}
\label{prop:1/b}
Let $X$ be a L\'evy process of infinite variation with $\sigma^2=0$ and $\beta\in(1,2]$. Then the following   holds: {\nf(i)} $J_\beta=\infty\!\iff\! k_{1/\beta}=\infty$ a.s.; {\nf(ii)} $I_\beta<\infty\!\iff\! K_{1/\beta}<\infty$ a.s.
\end{proposition}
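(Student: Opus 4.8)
The plan is to read off the faces of $C$ from the stick-breaking representation, use Khinchine's lemma to strip away the randomness of the heights, and then invoke the small-time behaviour of $X$ at $0$. Write $p\coloneqq\beta/(\beta-1)=1/(1-1/\beta)$. By the stick-breaking representation of the convex minorant (see, e.g.,~\cite{fluctuation_levy}), the faces $(\ell_n,\xi_n)_{n\in\N}$ may be enumerated so that $(\ell_n)_{n\in\N}$ is a uniform stick-breaking partition of $[0,T]$---i.e.\ $\ell_n=T_{n-1}-T_n$, $T_0=T$, $T_n=U_1\cdots U_nT$ with $U_i$ i.i.d.\ uniform on $(0,1)$---and, conditionally on $(\ell_n)_{n\in\N}$, the heights $(\xi_n)_{n\in\N}$ are independent with $\xi_n\eqd X_{\ell_n}$. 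Since $k_{1/\beta}$ and $K_{1/\beta}$ depend only on the multiset $\{(\ell_n,\xi_n):n\in\N\}$, this enumeration may be used, and, recalling $\mS_{1/\beta}=\{\xi_n/\ell_n^{1/\beta}:n\in\N\}$, we have $k_{1/\beta}=\sup_n|\xi_n|/\ell_n^{1/\beta}$ and $K_{1/\beta}^{\,p}=\sum_n(|\xi_n|/\ell_n^{1/\beta})^{p}$.

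Two facts about the stick-breaking sequence underpin the proof. First, the mean identity $\E[\sum_ng(\ell_n)]=\int_0^Tg(u)\,\D u/u$ for measurable $g\ge0$, which holds because $(-\log(T_n/T))_{n\in\N}$ are the points of a unit-rate Poisson process on $(0,\infty)$. Second, the divergence criterion: for measurable $g\colon(0,T]\to[0,\infty)$,
\[
\sum_n g(\ell_n)=\infty\text{ a.s.}\quad\Longleftrightarrow\quad\int_0^T\big(g(u)\wedge1\big)\,\frac{\D u}{u}=\infty.
\]
One direction (integral finite $\Rightarrow$ sum a.s.\ finite) is the mean identity applied to $g\wedge1$; for the converse, writing $\ell_n=T_{n-1}(1-U_n)$ with $U_n$ independent of $T_{n-1}$ and $\Phi(t)\coloneqq t^{-1}\int_0^t(g\wedge1)$, the partial sums $\sum_{n\le N}(g(\ell_n)\wedge1-\Phi(T_{n-1}))$ form a bounded-increment martingale with predictable quadratic variation $\le\sum_{n\le N}\Phi(T_{n-1})$, so a martingale law of large numbers reduces the claim to $\{\sum_n\Phi(T_{n-1})=\infty\}$; but $\sum_n\Phi(T_{n-1})$ is a Campbell functional of the Poisson process $\{T_n\}$ (intensity $\D u/u$ on $(0,T)$), hence a.s.\ infinite exactly when $\int_0^T(\Phi\wedge1)\,\D u/u=\infty$, which since $0\le\Phi\le1$ is equivalent to $\int_0^T(g\wedge1)\,\D u/u=\infty$. (Alternatively, this criterion can be extracted from Proposition~\ref{prop:SB-sum} and Lemma~\ref{lem:Khintchine}.)

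Part (ii) follows at once: with $Z_n\coloneqq(|\xi_n|/\ell_n^{1/\beta})^{p}$ we have $K_{1/\beta}^{\,p}=\sum_nZ_n$ and, given $(\ell_n)$, the $Z_n$ are independent with $Z_n\eqd(|X_{\ell_n}|/\ell_n^{1/\beta})^{p}$; Lemma~\ref{lem:Khintchine} applied conditionally on $(\ell_n)$ gives $\sum_nZ_n=\infty$ a.s.\ iff $\sum_n\phi(\ell_n)=\infty$ a.s., where $\phi(u)\coloneqq\E[\min\{(|X_u|/u^{1/\beta})^{p},1\}]\in[0,1]$, and by the divergence criterion this holds iff $\int_0^T\phi\,\D u/u=\infty$, i.e.\ (as $\phi\le1$) iff $I_\beta=\infty$. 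For part (i), $k_{1/\beta}=\infty$ iff for every $M\in\N$ the event $|\xi_n|>M\ell_n^{1/\beta}$ occurs for infinitely many $n$; by conditional independence, the Borel--Cantelli lemma and the divergence criterion applied to $h_M(u)\coloneqq\p(|X_u|>Mu^{1/\beta})\in[0,1]$ give $\p(k_{1/\beta}=\infty)=\1\{\forall M>0:\int_0^T\p(|X_u|>Mu^{1/\beta})\,\D u/u=\infty\}$. It remains to identify the complementary event ``$\exists M>0:\int_0^T\p(|X_u|>Mu^{1/\beta})\,\D u/u<\infty$'' with $J_\beta<\infty$: for $\beta\in(1,2)$ this is exactly the small-time characterisation of \cite[Thm~2.1]{MR2370602} for the power boundary $t\mapsto t^{1/\beta}$, which---since $\sigma^2=0$, $\beta$ is the exact Blumenthal--Getoor index, and $\beta>1$ makes any drift negligible at scale $t^{1/\beta}$---reduces to $\int_{(-1,1)}|x|^\beta\nu(\D x)<\infty$.

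The main obstacle is this last identification, specifically the implication $J_\beta<\infty\Rightarrow\exists M>0:\int_0^T\p(|X_u|>Mu^{1/\beta})\,\D u/u<\infty$: a single-scale truncation of $X_u$ at level $u^{1/\beta}$ together with a Chebyshev bound only delivers the strictly stronger $\int_{(-1,1)}|x|^\beta\log(1/|x|)\,\nu(\D x)<\infty$, so one genuinely needs the sharp two-sided small-time tail estimates behind \cite[Thm~2.1]{MR2370602}. The second technical point is the ``divergence $\Rightarrow$ a.s.\ infinite'' half of the divergence criterion (the Campbell/martingale argument); once both are in place, part (ii) is pure bookkeeping, and the endpoint $\beta=2$ of (i)---where $\lambda_2$ replaces $J_2$ as the relevant quantity---is dealt with in Proposition~\ref{prop:b=2}.
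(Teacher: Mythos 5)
Your strategy is in essence the paper's: reduce everything to the faces $(\ell_n,\xi_n)$ of $C$ via the stick-breaking representation, apply a 0--1 law for functionals of the faces, and invoke the small-time characterisation of \cite{MR2370602}. The difference is that you re-derive from scratch the two tools the paper already packages: your ``divergence criterion'' (the Campbell/martingale argument) is exactly Proposition~\ref{prop:SB-sum}, i.e.\ \cite[Thm~3.1]{SmoothCM}, and your conditional Borel--Cantelli step for $k_{1/\beta}$ is \cite[Cor.~3.2]{SmoothCM} as used in Corollary~\ref{cor:Khintchine}. Granting those, your part (ii) coincides with the paper's one-line application of Proposition~\ref{prop:SB-sum} with $\phi(x,t)=(|x|/t^{1/\beta})^{\beta/(\beta-1)}$. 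Two remarks on attribution and routing. First, the convergence criterion you use for the conditionally independent nonnegative sum ($\sum_n Z_n<\infty$ a.s.\ iff $\sum_n\E[\min\{Z_n,1\}]<\infty$) is the Khintchine--Kolmogorov theorem, \emph{not} Lemma~\ref{lem:Khintchine} of the paper, which is the upper-function lemma; the mathematics is fine but the reference is to the wrong result. Second, in part (i) you pass directly from ``$\exists M>0:\int_0^T\p(|X_u|>Mu^{1/\beta})u^{-1}\,\D u<\infty$'' to $J_\beta<\infty$. Since \cite[Thm~2.1]{MR2370602} is formulated in terms of $\limsup_{t\da 0}|X_t|/t^{1/\beta}$ rather than in terms of that integral, you still need Lemma~\ref{lem:Khintchine} to translate between the integral test and the limsup; this is precisely what Corollary~\ref{cor:Khintchine} accomplishes, so the missing ingredient is available in the paper, but your appeal to ``tail estimates behind'' the cited theorem should be replaced by an explicit use of that corollary (which is how the paper argues: $k_{1/\beta}=\infty$ iff $\limsup_{t\da0}|X_t|/t^{1/\beta}=\infty$ iff $\int_0^1\ov\nu(t^{1/\beta})\,\D t=\infty$ iff $J_\beta=\infty$).

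The one genuine gap is the endpoint $\beta=2$ in part (i). The proposition is stated for $\beta\in(1,2]$, and your deferral to Proposition~\ref{prop:b=2} does not yield the asserted equivalence $J_2=\infty\iff k_{1/2}=\infty$: that proposition characterises $k_{1/2}$ through $\lambda_2$, not $J_2$, and since $J_2<\infty$ holds automatically for every L\'evy measure the two conditions are not interchangeable (the $\sqrt{t}$ boundary is governed by the Gaussian-type functional $\lambda_2$ rather than by $\int_0^1\ov\nu(\sqrt{t})\,\D t$, which is always finite). The paper's proof nominally covers $r=1/\beta=1/2$ by the same citation of \cite[Thm~2.1]{MR2370602}, so if you restrict your argument to $\beta\in(1,2)$ you match the paper's treatment of the nontrivial cases; but as written your proposal does not establish (i) at $\beta=2$, and you should either supply that case or state explicitly that your proof of (i) is for $\beta\in(1,2)$.
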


Implicit in Proposition~\ref{prop:1/b} is the fact that $I_\beta<\infty$ implies $J_\beta<\infty$. Checking the finiteness of the integral $I_\beta$ in  Proposition~\ref{prop:1/b}(ii) may appear hard as it is given in~\eqref{eq:I_b} in terms of the truncated moments of the marginals of $X$. We now give sufficient conditions for $I_\beta<\infty$ in terms of the L\'evy measure $\nu$, implying, in particular, that $J_\beta<\infty\iff I_\beta<\infty$ when $\beta\in(1,2)$. Define 
\begin{equation}
\label{eq:ov_nu}
\ov\nu(u)\coloneqq \nu(\R\setminus(-u,u))
\quad\text{for $u\in(0,1]$.}
\end{equation}
Note that, by Fubini's theorem, $J_p=\int_{(-1,1)}|x|^p\nu(\D x)=\int_0^1\ov\nu(t^{1/p})\D t-\ov\nu(1)$ for any $p>0$. In particular, the condition $J_\beta<\infty$ is equivalent to $\int_0^1\ov\nu(t^{1/\beta})\D t<\infty$.

\begin{proposition}
\label{prop:suff_1/b}
Let $X$ be an infinite variation L\'evy process with $\sigma^2=0$. If $\beta\in(1,2)$, then $J_\beta<\infty\iff I_\beta<\infty$. If $\beta=2$, then $\int_0^1\log(1/t)\ov\nu(\sqrt{t})\D t<\infty$ implies $I_2<\infty$.
\end{proposition}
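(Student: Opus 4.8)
The plan is to bound $I_\beta$ from above by a small-time decomposition of $X_t$ at the scale $u=t^{1/\beta}$. For $\beta\in(1,2)$ the reverse implication $I_\beta<\infty\Rightarrow J_\beta<\infty$ is already contained in Proposition~\ref{prop:1/b} together with the first inequality in~\eqref{eq:Holder-K_r}, since $I_\beta<\infty\Rightarrow K_{1/\beta}<\infty$ a.s.$\Rightarrow k_{1/\beta}<\infty$ a.s.$\Rightarrow J_\beta<\infty$; hence in both regimes it suffices to prove $I_\beta<\infty$. Set $q\coloneqq\beta/(\beta-1)\in[2,\infty)$, fix $t\in(0,1)$, put $u=t^{1/\beta}\in(0,1)$, and write $X_t=X^{(u)}_t+R^{(u)}_t$, where $R^{(u)}_t$ is the sum of the jumps of $X$ on $[0,t]$ of magnitude $\ge u$ and $X^{(u)}$ is the L\'evy process with triplet $(0,\nu\1_{(-u,u)},b_u)$ relative to the truncation $\1_{(-1,1)}$; thus $X^{(u)}_t=b_ut+M^{(u)}_t$ with $b_u=b-\int_{u\le|x|<1}x\,\nu(\D x)$ and $M^{(u)}$ a mean-zero $L^2$-martingale with $\E[(M^{(u)}_t)^2]=t\ov\sigma^2(u)$. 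Using $\p(X_t\ne X^{(u)}_t)\le t\ov\nu(u)$, the bound $\min\{y^q,1\}\le\min\{y^2,1\}\le y^2$ for $q\ge2$, and $\min\{(a+b)^q,1\}\le2^q(\min\{a^q,1\}+\min\{b^q,1\})$, one gets
\[
\E\big[\min\{(|X_t|/u)^q,1\}\big]\le t\,\ov\nu(u)+2^q\min\{(|b_u|\,t/u)^q,1\}+2^q\,t\,\ov\sigma^2(u)/u^2 .
\]

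Next I integrate this estimate against $\D t/t$ over $(0,1)$ and substitute $u=t^{1/\beta}$, so that $t=u^\beta$ and $\D t/t=\beta\,\D u/u$. The big-jump term contributes $\int_0^1\ov\nu(t^{1/\beta})\,\D t=J_\beta+\ov\nu(1)$ (finite iff $J_\beta<\infty$, and always finite when $\beta=2$). The martingale term contributes $\beta\int_0^1u^{\beta-3}\ov\sigma^2(u)\,\D u$; bounding $\ov\sigma^2(u)\le2\int_0^uv\ov\nu(v)\,\D v$ (Fubini) and applying Fubini once more, this is at most a constant times $\int_0^1v\ov\nu(v)\big(\int_v^1u^{\beta-3}\,\D u\big)\D v$, which for $\beta\in(1,2)$ is a constant multiple of $\int_0^1v^{\beta-1}\ov\nu(v)\,\D v=(J_\beta+\ov\nu(1))/\beta$, and for $\beta=2$ equals $4\int_0^1v\ov\nu(v)\log(1/v)\,\D v$, finite precisely under the stated hypothesis after the change of variables $t=v^2$; this diffusive term is the only place the logarithmic condition is needed. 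Finally, for the drift term $|b_u|\le|b|+u\ov\nu(u)+\int_u^1\ov\nu(v)\,\D v$ (again by Fubini applied to $\int_{u\le|x|<1}|x|\,\nu(\D x)$), so $|b_u|\,t/u=|b_u|u^{\beta-1}$ is dominated by $|b|u^{\beta-1}+u^\beta\ov\nu(u)+u^{\beta-1}\int_u^1\ov\nu(v)\,\D v$, and each summand tends to $0$ as $u\da0$: the first since $\beta>1$; the second since $\int_0^1v^{\beta-1}\ov\nu(v)\,\D v<\infty$ (resp.\ $\int_0^1v\ov\nu(v)\,\D v<\infty$ when $\beta=2$) forces $u^\beta\ov\nu(u)\to0$; and the third by dominated convergence via $u^{\beta-1}\int_u^1\ov\nu(v)\,\D v=\int_u^1(u/v)^{\beta-1}v^{\beta-1}\ov\nu(v)\,\D v$ for $\beta\in(1,2)$ and via $u\int_u^1\ov\nu(v)\,\D v\le\int_u^{\sqrt u}v\ov\nu(v)\,\D v+\sqrt u\int_0^1v\ov\nu(v)\,\D v$ for $\beta=2$. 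Hence $\min\{(|b_u|t/u)^q,1\}\le|b_u|t/u$ for all small $u$, and $\int_0^1\big(|b|u^{\beta-1}+u^\beta\ov\nu(u)+u^{\beta-1}\int_u^1\ov\nu(v)\,\D v\big)\D u/u<\infty$ by elementary integration and one further application of Fubini (again using $\int_0^1v^{\beta-1}\ov\nu(v)\,\D v<\infty$, resp.\ $\int_0^1v\ov\nu(v)\,\D v<\infty$). Summing the three contributions yields $I_\beta<\infty$.

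The main obstacle is controlling the divergent truncated drift $b_u$: because $X$ has infinite variation, $|b_u|\to\infty$ as $u\da0$, so one must check that after rescaling by $t/u=u^{\beta-1}$ it still vanishes and remains summable, using only the Blumenthal--Getoor-level information $J_\beta<\infty$ (one cannot assume $\ov\nu(u)=\Oh(u^{-\beta})$), and in the borderline case $\beta=2$ balancing this against the exact logarithmic strength of the hypothesis; the remaining manipulations are routine.
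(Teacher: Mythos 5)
Your proof is correct, and while it rests on the same underlying idea as the paper's -- decompose $X_t$ at the threshold $u=t^{1/\beta}$ into compensated small jumps, a truncated drift and the large jumps, bound the truncated moment termwise, and integrate against $\D t/t$ via Fubini -- the implementation differs in a way worth noting. The paper does not rederive the decomposition: it invokes the second-moment bound $\E[\min\{|X_t|,t^r\}^2]\le t^2(\gamma-\varpi(t^r))^2+t\ov\sigma^2(t^r)+t^{2r+1}\ov\nu(t^r)$ from \cite[Lem.~A.1]{HowSmoothCM} (your $b_u$ is exactly $\gamma-\varpi(u)$) and then reduces $I_\beta<\infty$ to this exponent-$2$ estimate using $\beta/(\beta-1)\ge2$. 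The price of squaring everything is that the paper must show $\int_0^1 t^{1-2/\beta}\varpi(t^{1/\beta})^2\D t<\infty$, which requires both Lemma~\ref{lem:no-gap} (to get $\int_0^1 t\ov\nu(t^{1/\beta})^2\D t<\infty$) and a two-dimensional Fubini computation with the inequality $\min\{x,y\}^{2/r-2}\le (xy)^{1/r-1}$. You avoid all of this by keeping the drift contribution linear via $\min\{y^q,1\}\le y$, reducing that term to a single Fubini against $\int_0^1 v^{\beta-1}\ov\nu(v)\D v<\infty$; your treatment of the martingale and large-jump terms then coincides with the paper's, including the appearance of $\int_0^1\log(1/v)v\ov\nu(v)\D v$ at $\beta=2$. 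Two small remarks: the limiting arguments showing $|b_u|u^{\beta-1}\to0$ are superfluous, since $\min\{y^q,1\}\le y$ holds for all $y\ge0$ and $q\ge1$, not just for small $y$; and your chain $I_\beta<\infty\Rightarrow K_{1/\beta}<\infty\Rightarrow k_{1/\beta}<\infty\Rightarrow J_\beta<\infty$ for the converse is exactly the paper's appeal to Proposition~\ref{prop:1/b}, spelled out.
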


By Proposition~\ref{prop:suff_1/b}, for any process with $\beta\in(1,2)$, Proposition~\ref{prop:1/b} characterises the $(1/\beta)$-H\"older continuity of $C$. Moreover, since $\int_0^1\ov\nu(\sqrt{t})\D t<\infty$ for any L\'evy measure $\nu$, Proposition~\ref{prop:suff_1/b} shows that $I_2<\infty$ for many L\'evy measures.

If $X$ has no Brownian component (i.e. $\sigma^2=0$) but satisfies $\beta=2$, it is possible to have $k_{1/2}<\infty$ and $K_{1/2}=\infty\text{ a.s.}$, rendering~\eqref{eq:Holder-K_r}  insufficient to ascertain whether $C$ is $\tfrac{1}{2}$-H\"older continuous. Indeed, the phenomenon $k_{1/2}<\infty=K_{1/2}$ occurs whenever the constant $\lambda_2$ in~\eqref{eq:lambda} lies in $(0,\infty)$. In fact, we have the following. 

\begin{proposition}
\label{prop:b=2}
Suppose that $\sigma^2=0$ and $\beta=2$. Then \nf{(i)} $\lambda_2=\infty$ implies $k_{1/2}=\infty$ a.s., \nf{(ii)}
$\lambda_2\in(0,\infty)$ implies $k_{1/2}<\infty=K_{1/2}$ a.s., and \nf{(iii)} $\lambda_2=0$ implies $k_{1/2}<\infty$ a.s.
\end{proposition}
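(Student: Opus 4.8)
The plan is to reduce the statement to a single quantitative criterion for the boundedness of the set of $\tfrac12$-slopes $\mathcal S_{1/2}$, expressed through the truncated variance function $\ov\sigma^2(u)=\int_{(-u,u)}x^2\nu(\D x)$, and then apply Khinchine's lemma (Lemma~\ref{lem:Khintchine}) together with the $0$--$1$ law of Proposition~\ref{prop:SB-sum}. The starting point is the stick-breaking (equivalently, the Poisson--Dirichlet / uniform-order-statistics) description of the convex minorant of a Lévy process: conditionally on the partition of $[0,T]$ into interval lengths, the faces of $C$ have increments distributed as independent copies $X^{(n)}_{\ell_n}$ of the Lévy process run for the corresponding length $\ell_n$, and $k_{1/2}=\sup_n |X^{(n)}_{\ell_n}|/\sqrt{\ell_n}$. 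Hence $\{k_{1/2}<\infty\}$ is governed, via a Borel--Cantelli argument along the countable family of faces, by the convergence or divergence of a series of tail probabilities $\sum_n \p(|X_{\ell_n}|/\sqrt{\ell_n}>a)$ for large $a$; using that $\ell_n$ behaves like $e^{-cn}$ up to fluctuations (stick-breaking on $[0,T]$), this series is comparable to an integral $\int_0^1 \p(|X_t|/\sqrt t>a)\,\tfrac{\D t}{t}$, so $k_{1/2}<\infty$ a.s.\ iff this integral is finite for some (all large) $a$, while $k_{1/2}=\infty$ a.s.\ otherwise. This is exactly the dichotomy already packaged in Proposition~\ref{prop:SB-sum}; the new content here is to translate the finiteness of $\int_0^1\p(|X_t|>a\sqrt t)\,\tfrac{\D t}t$ into a statement about $\lambda_2$.

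The key step is a Gaussian-type tail bound for $X_t/\sqrt t$ at small $t$ in terms of $\ov\sigma^2(t)$. Write $X_t=X_t^{\le u}+X_t^{>u}$ as the sum of the compensated small jumps (magnitude $\le u$) and the large-jump part, and choose the truncation level $u=u(t)$ so that $X_t^{\le u}$ is, after centering, a martingale with variance $t\,\ov\sigma^2(u)$ and jumps bounded by $u$; the exponential (Bennett/Bernstein) inequality for such martingales gives, for $y>0$,
\begin{equation}\label{eq:bennett}
\p\big(|X_t^{\le u}-\E X_t^{\le u}|\ge y\big)\le 2\exp\!\Big(-\frac{y^2}{2t\,\ov\sigma^2(u)}\,h\big(\tfrac{uy}{t\ov\sigma^2(u)}\big)\Big),
\end{equation}
with $h(x)=((1+x)\log(1+x)-x)/x^2$. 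Taking $y=a\sqrt t$ and $u$ of order $\sqrt t$ (permissible since $\beta=2$ forces $\ov\sigma^2(u)\to0$ slowly enough that the drift and large-jump contributions are negligible on the relevant scale), the exponent becomes comparable to $-\,a^2/(2\ov\sigma^2(\sqrt t))$ up to logarithmic corrections in $h$, so that
\[
\p\big(|X_t|\ge a\sqrt t\big)\;\asymp\;\exp\!\Big(-\frac{a^2}{2\,\ov\sigma^2(c\sqrt t)}\Big)
\]
for a suitable constant $c$, in the sense of two-sided bounds with possibly different constants. Feeding this into $\int_0^1\p(|X_t|\ge a\sqrt t)\,\tfrac{\D t}t$ and changing variables, the integral is finite for a given $a$ precisely when $\int_0^1 e^{-a^2/(2\ov\sigma^2(u))}\,\tfrac{\D u}u<\infty$, i.e.\ precisely when $a>\lambda_2$ (resp.\ diverges when $a<\lambda_2$), by the definition~\eqref{eq:lambda} of $\lambda_2$. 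This yields (i): if $\lambda_2=\infty$ the integral diverges for every $a$, so $k_{1/2}=\infty$ a.s.; and (iii): if $\lambda_2=0$ the integral converges for every $a>0$, so $\sup_n|X^{(n)}_{\ell_n}|/\sqrt{\ell_n}<\infty$ a.s. For (ii), $\lambda_2\in(0,\infty)$ gives convergence for $a>\lambda_2$, hence $k_{1/2}<\infty$ a.s.; the equality $K_{1/2}=\infty$ a.s.\ then follows by showing that the heavier functional $K_{1/2}=\big(\sum_n |X^{(n)}_{\ell_n}|^{2}/\ell_n\big)^{1/2}$ diverges, because the summands no longer have a summable tail: a second-moment / Paley--Zygmund argument applied to $\sum_n \min\{|X^{(n)}_{\ell_n}|^2/\ell_n,M\}$ (with $M$ large) shows its expectation, which is $\asymp\int_0^1\E[\min\{(X_t/\sqrt t)^2,M\}]\tfrac{\D t}t$, is infinite when $\lambda_2>0$ (the integrand decays only like $\ov\sigma^2(\sqrt t)$ which is not $\tfrac{\D t}t$-integrable once $\lambda_2>0$), forcing $K_{1/2}=\infty$ a.s.\ by the $0$--$1$ law in Proposition~\ref{prop:SB-sum}.

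The main obstacle I anticipate is making the tail estimate~\eqref{eq:bennett} genuinely two-sided and uniform down to $t=0$: the upper bound is a routine application of the exponential martingale inequality, but the matching lower bound on $\p(|X_t|\ge a\sqrt t)$ — needed for the divergence halves of (i) and (iii) and for (ii) — requires more care, since one must produce enough probability mass at the level $a\sqrt t$ from a process whose Gaussian-like fluctuation scale is $\sqrt{t\,\ov\sigma^2(\sqrt t)}=o(\sqrt t)$. The natural route is a Gaussian comparison (e.g.\ a CLT-type or Edgeworth lower bound for the truncated martingale, or a direct moment-generating-function lower bound à la Khinchine/Paley) showing $\p(X_t^{\le u}\ge a\sqrt t)\ge \exp(-(1+o(1))a^2/(2\ov\sigma^2(u)))$ as $t\da0$ with $u\asymp\sqrt t$; the constant $\lambda_2$ is robust to the $o(1)$ because~\eqref{eq:lambda} is defined with a strict infimum, so only the exponential rate, not the constants, matters. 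A secondary technical point is controlling the drift term $\E X_t^{\le u}$ and the large-jump part $X_t^{>u}$ on the scale $\sqrt t$; for $\beta=2$ one has $\ov\nu(\sqrt t)=\oh(\ov\sigma^2(\sqrt t)/t)$ and the natural centering drift is $\oh(\sqrt{t\,\ov\sigma^2(\sqrt t)})$, so both are negligible, but this has to be checked against the (few) pathological Lévy measures with very slowly varying $\ov\sigma^2$.
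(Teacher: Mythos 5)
Your overall skeleton is right and matches the paper's: reduce everything to the finiteness of $\int_0^1\p(|X_t|>a\sqrt{t})\,t^{-1}\D t$ via the 0--1 laws (Proposition~\ref{prop:SB-sum}, \cite[Cor.~3.2]{SmoothCM}) and Khintchine's lemma, then relate that integral test to $\lambda_2$. But the step where you convert the integral test into the constant $\lambda_2$ is a genuine gap, and it is exactly the step the paper does \emph{not} prove: it invokes \cite[Thm~2.1]{MR2370602} (Bertoin--Doney--Maller), which states that $\limsup_{t\da0}|X_t|/\sqrt{t}=\lambda_2$ a.s., and then Corollary~\ref{cor:Khintchine} finishes (i) and (iii) in one line. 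Your proposed substitute --- a two-sided estimate $\p(|X_t|\ge a\sqrt{t})\asymp\exp(-a^2/(2\ov\sigma^2(c\sqrt{t})))$ from Bennett's inequality with truncation $u\asymp\sqrt{t}$ --- does not work as stated. With $u\asymp\sqrt t$ and deviation $y=a\sqrt t$, the Bennett parameter $uy/(t\ov\sigma^2(u))=a/\ov\sigma^2(\sqrt t)\to\infty$ as $t\da0$ (since $\ov\sigma^2(\sqrt t)\to0$), so you are in the Poissonian regime: the exponent is of order $-\tfrac{a}{2}\log\bigl(a/\ov\sigma^2(\sqrt t)\bigr)$, not $-a^2/(2\ov\sigma^2(\sqrt t))$. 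To land in the Gaussian regime you would need $u\lesssim\sqrt{t}\,\ov\sigma^2(u)/a\ll\sqrt t$, and then the jumps in $(u,\sqrt t]$ are no longer negligible --- this is precisely the delicate interplay that makes the cited theorem hard. The matching lower bound, which you correctly flag as the ``main obstacle'' and which is needed for (i), for (ii), and for the sharpness of the constant, is left entirely open; it carries essentially all of the content of the result you are trying to prove.

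Two further remarks. First, for (ii) your route to $K_{1/2}=\infty$ (showing $I_2=\infty$ via a lower bound $\E[\min\{(X_t/\sqrt t)^2,1\}]\gtrsim\ov\sigma^2(\sqrt t)$ together with $\lambda_2>0\Rightarrow\int_0^1\ov\sigma^2(\sqrt t)\,t^{-1}\D t=\infty$, the latter following from $e^{-y}\le(ey)^{-1}$) is plausible but rests on an unproved two-sided truncated-moment estimate; the paper's argument is much lighter: by the contrapositive of Lemma~\ref{lem:Khintchine}(i), $\lambda_2>0$ forces $\int_0^1\p(|X_t|/\sqrt{t}>R)t^{-1}\D t=\infty$ for $R<\lambda_2/4$, so \cite[Cor.~3.2]{SmoothCM} gives infinitely many elements of $\mS_{1/2}$ of magnitude at least $\ve$, and then $K_{1/2}^2=\sum_{s\in\mS_{1/2}}|s|^2=\infty$ trivially. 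Second, given that the paper has already defined $\lambda_2$ as the Bertoin--Doney--Maller constant and established Corollary~\ref{cor:Khintchine}, parts (i) and (iii) are immediate from the equivalence $k_{1/2}<\infty\iff\limsup_{t\da0}|X_t|/\sqrt t<\infty$; re-deriving the small-time LIL from scratch is neither necessary nor, in the form you sketch, achievable.
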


Observe that $\lambda_2=0$ whenever $\Lambda(e^{-e})<\infty$ where $\Lambda(x)\coloneqq\int_0^x(\log\log(1/t))\ov\nu(\sqrt{t})\D t$ (see~\cite[Rem.~2(iii)]{MR2370602}) since, by Fubini's theorem, $\ov\sigma^2(x)\le\int_0^x\ov\nu(\sqrt{t})\D t\le \Lambda(x)/\log\log(1/x)$ for $x\in(0,e^{-e})$ and $\lim_{x\da 0}\Lambda(x)=0$. Note that the condition $\int_0^1\log(1/t)\ov\nu(\sqrt{t})\D t<\infty$ in Proposition~\ref{prop:suff_1/b} is stronger than $\Lambda(e^{-e})<\infty$, so that, even in the case where $\Lambda(e^{-e})<\infty$, we cannot establish $I_2<\infty$ via Proposition~\ref{prop:suff_1/b}. We suspect, however, that $I_2<\infty$ whenever $\lambda_2=0$. 

We conclude this subsection with two examples.

\begin{example}[{\cite[Rem~2(iii)]{MR2370602}}]
\label{ex:b=2}
Let $\nu(\D x)=\tfrac{1}{2}x^{-3}\log(1/x)^{-1}(\log\log(1/x))^{-2}\1_{(0,e^{-e})}(x)\D x$, so that $\ov\sigma^2(x)=\tfrac{1}{2}(\log\log(1/x))^{-1}\1_{(0,e^{-e})}(x)$. A direct calculation gives $\lambda_2=1\in(0,\infty)$. In this case, we clearly have $k_{1/2}<\infty=K_{1/2}$ a.s. by Proposition~\ref{prop:b=2}, making~\eqref{eq:Holder-K_r} insufficient to determine whether $C$ is $\tfrac{1}{2}$-H\"older.
\end{example}

\begin{example}
\label{ex:Holder-K_r-strict}
In this example we construct a piecewise linear convex function for which the first inequality in~\eqref{eq:Holder-K_r} is strict. Fix $r\in(0,1)$, $T>0$ and consider the piecewise linear convex function $C:t\mapsto -T^r\min\{t,T/2\}-rT^r\max\{t-T/2,0\}$ on $[0,T]$, which has two faces, both of length $T/2$ and with heights $-T^r$ and $-rT^r$, respectively. Then 
\[
\sup_{0\le u<t\le T}\frac{|C_t-C_u|}{(t-u)^r}
=\frac{|C_T-C_0|}{T^r}
=1+r
>2^r
=k_r,
\]
where the inequality follows since $g:x\mapsto 2^x-1-x$ is convex and $g(0)=g(1)=0$.
\end{example}

\subsection{Strategy for the proofs and connections with the literature}

For any $r\in(0,1)$, we give sufficient as well as necessary conditions for the convex minorant $C$ to be $r$-H\"{o}lder continuous  in terms of the set of $r$-slopes $\mS_r\coloneqq\{\xi_n/\ell_n^r:n\in\N\}$ (see~\eqref{eq:Holder-K_r} and Lemma~\ref{lem:PC-Holder}). In Proposition~\ref{prop:SB-sum} we generalise the 0--1 law of~\cite[Thm~3.1]{SmoothCM} and use it to characterise the finiteness of $K_r$ in terms of the truncated moments of the marginals of $X$. Through Khintchine's characterisation of the two-sided upper functions of $X$~\cite{MR0002054}, given in Lemma~\ref{lem:Khintchine} below, and the 0--1 law in~\cite[Cor~3.2]{SmoothCM}, we find that $k_r<\infty$ a.s. if and only if the a.s. constant (by Blumenthal's 0--1 law~\cite[Prop.~40.4]{MR1113220}) $\lambda_{1/r}=\limsup_{t\da0}|X_t|/t^r$ is finite, see Corollary~\ref{cor:Khintchine} below. The final ingredient in the proofs of Propositions~\ref{prop:not-1/b} and~\ref{prop:1/b} are the characterisations of the limit $\lambda_{1/r}$ given in~\cite[Sec.~47]{MR3185174} and~\cite{MR2370602}, respectively. In Section~\ref{sec:conclusion} we discuss a possible extension of Proposition~\ref{prop:not-1/b} and its connection to the characterisation in~\cite{MR968135} of the limits $\limsup_{t\da0}|X_t|/h(t)$ for a non-decreasing $h$.


\section{Proofs}
\label{sec:proofs}

We begin with an elementary deterministic lemma that yields the inequalities in~\eqref{eq:Holder-K_r}.

\begin{lemma}\label{lem:PC-Holder}
Let $f$ be an absolutely continuous, piecewise linear function with infinitely many faces, defined on the interval $[a,b]$. Given any enumeration of the maximal intervals of linearity of $f$, let $(l_n)_{n\in\N}$ and $(h_n)_{n\in\N}$ be the corresponding sequences of horizontal lengths and vertical heights, respectively, of those line segments. Then for any $r\in(0,1)$ we have
\[
\sup_{n \in \N} |h_n|l_n^{-r}
\le\sup_{a\le u<t\le b}\frac{|f(t)-f(u)|}{(t-u)^r}
\le\bigg(\sum_{n \in \N} (|h_n|l_n^{-r})^{1/(1-r)}\bigg)^{1-r}.
\]
\end{lemma}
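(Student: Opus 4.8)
The plan is to prove the two inequalities separately; the first is immediate, and the second follows from the triangle inequality, an elementary estimate for the partial faces at the ends of the interval, and H\"older's inequality.

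For the left-hand inequality I would simply evaluate the supremum at the endpoints of each face: if the $n$-th maximal interval of linearity is $[c_n,c_n+l_n]\subseteq[a,b]$, then $|f(c_n+l_n)-f(c_n)|=|h_n|$, so the ratio $|f(t)-f(u)|/(t-u)^r$ at $u=c_n$, $t=c_n+l_n$ equals $|h_n|l_n^{-r}$; taking the supremum over $n\in\N$ gives the claim.

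For the right-hand inequality I would fix $a\le u<t\le b$ and use absolute continuity to write $f(t)-f(u)=\int_u^t f'(s)\,\D s$, where $f'$ equals a.e.\ on the $n$-th face its slope $h_n/l_n$. Let $A\subseteq\N$ index the faces overlapping $(u,t)$ and, for $n\in A$, let $l_n'\in(0,l_n]$ be the length of that overlap; since the faces cover $[a,b]$ up to the countable set of breakpoints, $\sum_{n\in A}l_n'=t-u$. Then, by the triangle inequality and the elementary bound $l_n'/l_n^{1-r}=(l_n')^r(l_n'/l_n)^{1-r}\le(l_n')^r$ (which holds because $l_n'\le l_n$ and $1-r>0$),
\[
|f(t)-f(u)|\le\sum_{n\in A}\frac{|h_n|}{l_n}\,l_n'=\sum_{n\in A}\frac{|h_n|}{l_n^r}\cdot\frac{l_n'}{l_n^{1-r}}\le\sum_{n\in A}\frac{|h_n|}{l_n^r}\,(l_n')^r.
\]
Applying H\"older's inequality with conjugate exponents $1/(1-r)$ and $1/r$, and using $\sum_{n\in A}l_n'=t-u$, this is at most
\[
\bigg(\sum_{n\in A}\big(|h_n|l_n^{-r}\big)^{1/(1-r)}\bigg)^{1-r}\bigg(\sum_{n\in A}l_n'\bigg)^{r}\le\bigg(\sum_{n\in\N}\big(|h_n|l_n^{-r}\big)^{1/(1-r)}\bigg)^{1-r}(t-u)^r.
\]
Dividing by $(t-u)^r$ and taking the supremum over $u<t$ finishes the proof.

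I do not expect any genuine obstacle here. The only points needing a little care are the bookkeeping for the partial faces at the two ends of $[u,t]$ — in particular the identity $\sum_{n\in A}l_n'=t-u$ — and the choice of the elementary inequality $l_n'/l_n^{1-r}\le(l_n')^r$, which is precisely what makes the exponents match up for H\"older. Alternatively, one can package the upper bound via the remark after~\eqref{eq:Holder-K_r} that the right-hand side is the $L^{1/(1-r)}([a,b])$-norm of $f'$: H\"older on $[u,t]$ gives $|f(t)-f(u)|\le\int_u^t|f'(s)|\,\D s\le\|f'\|_{L^{1/(1-r)}([a,b])}(t-u)^r$, and $\|f'\|_{L^{1/(1-r)}([a,b])}^{1/(1-r)}=\sum_{n\in\N}l_n\,|h_n/l_n|^{1/(1-r)}=\sum_{n\in\N}(|h_n|l_n^{-r})^{1/(1-r)}$.
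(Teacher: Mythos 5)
Your proof is correct and rests on the same key step as the paper's: H\"older's inequality with conjugate exponents $1/(1-r)$ and $1/r$ applied to $f'$, with the right-hand side identified as the $L^{1/(1-r)}$-norm of $f'$ (your closing remark is verbatim the paper's argument). Your main writeup merely discretizes this into a sum over the faces overlapping $[u,t]$ before applying H\"older for sums, which is a cosmetic variant of the same computation.
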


\begin{proof}[Proof]
Fix $r \in (0,1)$ and let $p=1/(1-r)>1$. Let $(g_n,d_n)$, $n\in\N$, be the maximal intervals of linearity of $f$ where the slope of $f$ over $(g_n,d_n)$ equals $h_n/l_n$ and $\sum_{n\in\N}(d_n-g_n)=b-a$. The lower bound is obvious since it is attained by restricting the supremum to the values $(u,t)=(g_n,d_n)$. To establish the upper bound, first note that $f'$ exists on the set $\bigcup_{n\in\N}(g_n,d_n)$ of measure $b-a$ and 
\[
    \int_a^b|f'(t)|^p \D t= \sum_{n \in \N}\int_{g_n}^{d_n}|f'(t)|^p \D t= \sum_{n \in \N}\frac{|h_n|^p}{l_n^p} \int_{g_n}^{d_n}\D t
    =\sum_{n \in \N} \frac{|h_n|^p}{l_n^{p-1}}.
\]
By H\"older's inequality with $p$ and $q= p/(p-1)=1/r>1$, it follows 
\[
|f(t)-f(u)| 
\le\int_u^t|f'(x)|\D x
=\int_a^b\1_{[u,t]}(x)|f'(x)|\D x
\le (t-u)^{1/q} \bigg( \int_a^b |f'(x)|^p \D x\bigg)^{1/p}.
\] 
Thus, we have
\begin{align*}
\sup_{a\le u<t\le b}
    \frac{|f(t)-f(u)|}{(t-u)^{r}} &
\le\bigg( \int_a^b |f'(x)|^p \D x\bigg)^{1/p} \\
& =\bigg(\sum_{n \in \N} \frac{|h_n|^p}{l_n^{p-1}}\bigg)^{1/p}
=\bigg(\sum_{n \in \N} (|h_n|l_n^{-r})^{1/(1-r)}\bigg)^{1-r}.
\qedhere
\end{align*}
\end{proof}

The proofs of Propositions~\ref{prop:not-1/b} \& \ref{prop:1/b} hinge on two key tools. First is the 0--1 law in Proposition~\ref{prop:SB-sum}, generalising~\cite[Thm~3.1]{SmoothCM} to unbounded functionals of the faces of $C$, and second is Khintchine's characterisation of the upper functions of $|X|$ at zero given in Lemma~\ref{lem:Khintchine} below. 
Recall that since $X$ is of infinite activity, its convex minorant $C$ is a piecewise linear function whose \emph{maximal} intervals of linearity have corresponding sequences of horizontal lengths $(\ell_n)_{n\in\N}$ and vertical heights $(\xi_n)_{n\in\N}$ given by the formulae in~\cite[Thm~3.1]{fluctuation_levy}. (If $X$ is of finite activity, the intervals of linearity of the stick-breaking representation in~\cite[Thm~3.1]{fluctuation_levy} are \emph{not maximal} and, in fact, all but finitely many faces have the same slope).

\begin{proposition}
\label{prop:SB-sum}
Let $\phi:\R\times(0,\infty)\to[0,\infty)$ be measurable. Then the sum $\sum_{n\in\N}\phi(\xi_n,\ell_n)$ is either a.s. finite or a.s. infinite. Moreover, we have
\begin{align}
\label{eq:SB-sum}
\sum_{n\in\N}\phi(\xi_n,\ell_n)<\infty\quad\text{a.s.}
\quad&\iff\quad
\int_0^1\E[\min\{\phi(X_t,t), 1\}]\frac{\D t}{t}<\infty.
\end{align}
\end{proposition}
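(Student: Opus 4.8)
The plan is to use the stick-breaking representation of the faces of $C$ from \cite[Thm~3.1]{fluctuation_levy}, which says the following. Let $(V_n)_{n\in\N}$ be the stick-breaking process on $[0,T]$ with uniform factors: $L_0=T$ and $L_n = U_n L_{n-1}$ for i.i.d.\ uniform $U_n$ on $(0,1)$, and set $\ell_n = L_{n-1}-L_n$. Given $(\ell_n)_{n\in\N}$, the vertical heights $\xi_n$ are conditionally independent with $\xi_n \overset{d}{=} X_{\ell_n}$. (This is exactly the representation underlying \cite[Thm~3.1]{SmoothCM}.) Hence
$$
\E\Big[\sum_{n\in\N}\min\{\phi(\xi_n,\ell_n),1\}\Big]
=\E\Big[\sum_{n\in\N}\psi(\ell_n)\Big],
\qquad
\psi(u)\coloneqq\E[\min\{\phi(X_u,u),1\}].
$$
Since the stick-breaking lengths satisfy $\E\big[\sum_{n\in\N} f(\ell_n)\big]=\E\big[\sum_{n\in\N}f(L_{n-1}-L_n)\big]$ and the points $L_n$ form (after a logarithmic change of variables) a standard Poisson process, a direct computation — or the well-known identity $\E\big[\sum_{n\in\N}g(L_{n-1})\big]=\int_0^T g(u)\,\D u/u$ applied carefully — gives
$$
\E\Big[\sum_{n\in\N}\psi(\ell_n)\Big]
=\int_0^T \E\big[\psi(U\cdot u)\big]\,\frac{\D u}{u}
$$
up to an elementary rearrangement; the key point is that this equals (a constant multiple of, and in any case is finite iff) $\int_0^1 \E[\min\{\phi(X_t,t),1\}]\,\D t/t$. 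So the right-hand side of \eqref{eq:SB-sum} is finite iff $\sum_n\min\{\phi(\xi_n,\ell_n),1\}$ has finite expectation, which (being a sum of nonnegative terms) is equivalent to $\sum_n\min\{\phi(\xi_n,\ell_n),1\}<\infty$ a.s., which in turn is equivalent to $\sum_n\phi(\xi_n,\ell_n)<\infty$ a.s. (truncating at $1$ changes only finitely many terms on the event of convergence). This establishes \eqref{eq:SB-sum}.

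For the 0--1 law, the plan is to invoke the same argument as in \cite[Thm~3.1, Cor~3.2]{SmoothCM}: conditionally on the stick-breaking lengths, $\sum_n \phi(\xi_n,\ell_n)$ is a sum of independent nonnegative random variables, so by Kolmogorov's three-series theorem its convergence is a tail event and has conditional probability in $\{0,1\}$; one then shows this conditional probability is a.s.\ constant. The cleanest route is to note that the conditional probability of convergence equals $\1\{\sum_n \E[\min\{\phi(\xi_n,\ell_n),1\}\mid(\ell_k)_k]<\infty\}$ (again by the three-series theorem, using $0\le\phi$), i.e. $\1\{\sum_n \psi(\ell_n)<\infty\}$, and this is a tail event of the i.i.d.\ sequence $(U_n)_{n\in\N}$ driving the stick-breaking, hence a.s.\ constant by Kolmogorov's 0--1 law. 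Therefore $\p(\sum_n\phi(\xi_n,\ell_n)<\infty)\in\{0,1\}$, and by the first part it equals $1$ precisely when the integral in \eqref{eq:SB-sum} is finite.

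The main obstacle I anticipate is getting the Fubini/change-of-variables computation for $\E\big[\sum_n\psi(\ell_n)\big]$ exactly right, including justifying the interchange of expectation and the infinite sum (straightforward here by Tonelli, since everything is nonnegative) and identifying the precise form of the resulting integral — in particular checking that $\int_0^T\E[\psi(Uu)]\,\D u/u$ is finite iff $\int_0^1\psi(t)\,\D t/t$ is, which follows because $\psi$ is bounded by $1$ and $\int_0^T\E[\psi(Uu)]\,\D u/u = \int_0^T\psi(v)\,(\D v/v)\cdot\Leb\text{-weight}$, the inner integral over $u\in[v,T]$ of $\D u/u$ contributing a bounded-above, log-type factor that does not affect finiteness near $0$. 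A secondary point requiring care is the reduction from a statement about \emph{maximal} faces to the stick-breaking faces when $X$ has finite activity (as flagged in the text preceding the proposition): there all but finitely many stick-breaking faces share a slope, so merging them changes $\sum_n\phi(\xi_n,\ell_n)$ only by collapsing a convergent-or-divergent tail into finitely many terms, which does not affect either side of \eqref{eq:SB-sum} nor the 0--1 dichotomy; I would handle the infinite-activity case (where stick-breaking faces are already maximal) as the main case and dispatch finite activity with this one-line remark.
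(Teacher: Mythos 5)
Your overall architecture (truncate $\phi$ at $1$, use the stick-breaking representation of \cite[Thm~3.1]{fluctuation_levy}, condition on the lengths and apply the three-series theorem to the conditionally independent heights) is the right one --- it is essentially a re-derivation of the 0--1 law \cite[Thm~3.1]{SmoothCM}, which the paper's own two-line proof simply cites after the same truncation step. The Tonelli computation $\E\big[\sum_n\psi(\ell_n)\big]=\int_0^T\psi(t)\,t^{-1}\D t$ with $\psi(u)=\E[\min\{\phi(X_u,u),1\}]$ is correct, as is the conditional reduction $\p\big(\sum_n\min\{\phi(\xi_n,\ell_n),1\}<\infty\mid(\ell_k)_k\big)=\1\{\sum_n\psi(\ell_n)<\infty\}$. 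But the argument has a genuine gap at the final step, in the divergence direction. First, a small error: $\{\sum_n\psi(\ell_n)<\infty\}$ is \emph{not} a tail event of $(U_n)_{n\in\N}$, since changing $U_1$ rescales $\ell_n=T(1-U_n)\prod_{k<n}U_k$ for \emph{every} $n\ge 2$, and $\{\sum_n\psi(c\,\ell_n)<\infty\}$ need not coincide with $\{\sum_n\psi(\ell_n)<\infty\}$ for a general measurable $\psi$. (The event \emph{is} invariant under finite permutations of the $U_k$, so Hewitt--Savage rescues the 0--1 law; Kolmogorov's does not apply.) Second, and more seriously, even granting $\p\big(\sum_n\psi(\ell_n)<\infty\big)\in\{0,1\}$, your closing sentence ``by the first part it equals $1$ precisely when the integral is finite'' only uses $\E\big[\sum_n\psi(\ell_n)\big]=\int_0^T\psi(t)\,t^{-1}\D t$. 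Finiteness of the integral does give a.s.\ convergence, but its divergence only gives infinite \emph{expectation}, which is perfectly compatible with the sum being a.s.\ finite. Nothing in your write-up rules this out, and this is exactly the nontrivial content of the cited 0--1 law. A standard way to close it: by the conditional Borel--Cantelli lemma for $[0,1]$-valued adapted summands, $\sum_n\psi(\ell_n)<\infty$ a.s.\ iff $\sum_n\E[\psi(\ell_n)\mid U_1,\dots,U_{n-1}]=\sum_n L_{n-1}^{-1}\int_0^{L_{n-1}}\psi(v)\,\D v<\infty$ a.s., and since $(-\log(L_n/T))_{n\ge1}$ is a unit-rate Poisson process one can apply the exact Poisson convergence criterion to obtain the dichotomy in terms of $\int_0^T\psi(t)\,t^{-1}\D t$. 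Finally, your closing remark on finite activity is also off: merging an infinite convergent-or-divergent tail of stick-breaking faces into finitely many maximal faces \emph{can} change a divergent sum into a finite one (take $\phi\equiv1$), which is precisely why the statement is only asserted for infinite-activity processes --- fortunately moot here, as the paper assumes infinite variation throughout.
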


\begin{proof}[Proof]
Note that $\sum_{n\in\N}a_n<\infty$ if and only if $\sum_{n\in\N}\min\{a_n,1\}<\infty$ for any sequence $(a_n)_{n\in\N}$ in $[0,\infty)$. Thus, 
$\sum_{n\in\N}\phi(\xi_n,\ell_n)<\infty
\text{ a.s.}\iff\sum_{n\in\N}\min\{\phi(\xi_n,\ell_n),1\}<\infty\text{ a.s.}$ and the equivalence in~\eqref{eq:SB-sum} 
follows from~\cite[Thm~3.1]{fluctuation_levy} and 
the 0--1 law~\cite[Thm~3.1]{SmoothCM} applied to the bounded function $(t,x)\mapsto \min\{\phi(x,t),1\}$.
\end{proof}

The following characterisation due to Khintchine~\cite{MR0002054} is central in relating the upper fluctuations of $|X|$ and the faces of $C$. Recall that, for any positive measurable function $h:(0,\infty)\to(0,\infty)$, $\limsup_{t\da 0}|X_t|/h(t)$ is a.s. a constant on $[0,\infty]$ by Blumenthal's 0--1 law~\cite[Prop.~40.4]{MR1113220}.

\begin{lemma}[Khintchine]
\label{lem:Khintchine}
Suppose $X$ is not compound Poisson with drift. Let $h:(0,\infty)\to(0,\infty)$ be measurable and increasing at $0$ and fix $R>0$. The following statements hold.
\begin{enumerate}
\item[{\nf(i)}] If $\int_0^1  \p(|X_t|/h(t)>R/4)t^{-1}\D t<\infty$, then
$\limsup_{t\downarrow0}|X_t|/h(t)\le R$ a.s.
\item[{\nf(ii)}] If $\int_0^1  \p(|X_t|/h(t)>8R)t^{-1}\D t=\infty$, then $\limsup_{t\downarrow0}|X_t|/h(t)\ge R$ a.s.
\end{enumerate}
\end{lemma}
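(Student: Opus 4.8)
The final statement is a classical theorem of Khintchine, so the cleanest option is simply to cite~\cite{MR0002054}; if a self-contained argument is wanted, the plan is to run the standard dyadic/Borel--Cantelli scheme, which I sketch here. Setup: since $h$ is non-decreasing on some $(0,\tau]$ and only arbitrarily small times matter, assume $h$ is non-decreasing on $(0,1]$, and partition $(0,1]$ into blocks $(b_{n+1},b_n]$ with $b_0=1$, $b_{n+1}\in[b_n/2,b_n)$ and $h(b_n)\le 2h(b_{n+1})$ --- start from the dyadic blocks $[2^{-m-1},2^{-m}]$ and subdivide any on which $h$ varies by more than a factor $2$. I would use the L\'evy--Ottaviani maximal inequality $\p(\sup_{s\le t}|X_s|>2a)\le\p(|X_t|>a)/(1-\sup_{s\le t}\p(|X_s|>a))$ for $a,t>0$, the fact that $\sup_{s\le t}\p(|X_s|>a)\to0$ as $t\da0$ for fixed $a>0$ (so the above denominators stay $\ge1/2$ along the blocks), and --- for the finer steps --- standard two-sided estimates comparing $\p(|X_s|>a)$ at comparable times $s$. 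The constants $1/4$ and $8$ in the hypotheses are precisely what is needed to absorb the losses in these steps.

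For part~(i) I would use the first Borel--Cantelli lemma. Monotonicity of $h$ gives $\{\exists\,t\in(0,b_N]:|X_t|>Rh(t)\}\subseteq\bigcup_{n\ge N}\{\sup_{s\le b_n}|X_s|>Rh(b_{n+1})\}$, so it suffices to show $\sum_n\p(\sup_{s\le b_n}|X_s|>Rh(b_{n+1}))<\infty$. The maximal inequality bounds the $n$-th summand by $2\,\p(|X_{b_n}|>(R/2)h(b_{n+1}))$ for large $n$, and comparing this with the integral of $\p(|X_t|>(R/4)h(t))t^{-1}$ over a block adjacent to $(b_{n+1},b_n]$ --- using once more that $h$ is non-decreasing and varies by at most a factor $2$ per block, which is what the slack between $R/2$ and $R/4$ buys --- bounds the whole sum by a constant multiple of $\int_0^1\p(|X_t|>(R/4)h(t))t^{-1}\D t<\infty$. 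Hence $\limsup_{t\da0}|X_t|/h(t)\le R$ a.s.

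For part~(ii) I would argue by contradiction using the Borel--Cantelli dichotomy for independent events. Suppose the a.s.\ constant (by Blumenthal's $0$--$1$ law) value $\limsup_{t\da0}|X_t|/h(t)$ is $<R$. Then $\p(\sup_{t<\delta}|X_t|/h(t)<R)\to1$ as $\delta\da0$, so for every $\varepsilon>0$ there is $\delta$ on whose associated event one has $|X_{b_n}|<Rh(b_n)$ for all large $n$; since $h(b_{n+1})\le h(b_n)$, this forces the \emph{independent} increments $\Delta_n\coloneqq X_{b_n}-X_{b_{n+1}}\eqd X_{b_n-b_{n+1}}$ to satisfy $|\Delta_n|\le 2Rh(b_n)$ for all large $n$. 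Letting $\varepsilon\da0$ gives $\p(|\Delta_n|>2Rh(b_n)\ \text{infinitely often})=0$, whence, by independence, $\sum_n\p(|\Delta_n|>2Rh(b_n))<\infty$. On the other hand, bounding $\p(|X_t|>8Rh(t))$ for $t\in(b_{n+1},b_n]$ from above in terms of $\p(|\Delta_n|>2Rh(b_n))$ --- using $h(b_n)\le2h(b_{n+1})$, the maximal inequality, and the comparability of $\p(|X_s|>a)$ for $s$ in the block with $\p(|X_{b_n-b_{n+1}}|>a)$, with the factor $2R\mapsto8R$ exactly covering these passages --- yields $\int_0^1\p(|X_t|>8Rh(t))t^{-1}\D t<\infty$, contradicting the hypothesis. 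Hence $\limsup_{t\da0}|X_t|/h(t)\ge R$ a.s.

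The main obstacle is the constant bookkeeping in these block-by-block comparisons, done uniformly for a completely general non-decreasing $h$ (hence the need to subdivide blocks on which $h$ jumps, and to control the concentration functions $\sup_{s\le t}\p(|X_s|>\text{level})$ as the level tends to $0$) and uniformly in the block widths (so that the integral over a block is comparable to a single probability rather than to many); this is the step at which the hypothesis that $X$ is not compound Poisson with drift is used --- it guarantees that the relevant concentration functions do not degenerate, and in any case it holds automatically in our setting, where $X$ has infinite variation. As all of this is standard, in the write-up I would in the end simply refer to~\cite{MR0002054}.
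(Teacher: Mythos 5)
Your part~(i) follows essentially the same route as ours (dyadic blocks, an Ottaviani-type maximal inequality, first Borel--Cantelli), and your part~(ii) takes a genuinely different and in principle viable route: Blumenthal's 0--1 law plus the second Borel--Cantelli lemma applied to the independent increments $\Delta_n$, in place of our direct recursion on $r_n=\p(M(2^{-n})>R)$. However, as written the sketch has gaps that go beyond ``constant bookkeeping''. First, your block construction is impossible for a general monotone $h$: a non-decreasing $h$ may jump by more than a factor of $2$ at a single point $t_0$, and then every block $(b_{n+1},b_n]$ with $b_{n+1}<t_0\le b_n$ violates $h(b_n)\le 2h(b_{n+1})$, no matter how finely you subdivide. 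Our proof uses only the one-sided bound $h(s)\le h(t)$ for $s\le t$, always comparing against $h$ evaluated at the left endpoint of a block; several of your steps instead assume two-sided control of the oscillation of $h$, which is not available.

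Second, two of your probabilistic comparisons run in the wrong direction for the subadditivity bound $\p(|X_{s+u}|\ge x)\le\p(|X_s|\ge y)+\p(|X_u|\ge x-y)$. (a) The denominator $1-\sup_{s\le t}\p(|X_s|>a)$ in the maximal inequality is controlled ``for fixed $a$'', but your levels $a\sim Rh(b_n)$ shrink towards $Rh(0+)$, which may be $0$; keeping the denominator bounded below requires proving $\p(|X_t|>2Rh(t))\to0$ as $t\da0$, which is exactly what Step~1 of our proof of part~(i) extracts from the integral hypothesis --- and no such hypothesis is available in part~(ii). (b) In part~(ii) you need to bound $\p(|X_t|>8Rh(t))$ for $t$ throughout a block from above by $\p(|\Delta_n|>2Rh(b_n))$, where $b_n-b_{n+1}\le b_{n+1}<t$; subadditivity only bounds tails at \emph{larger} times by tails at \emph{smaller} times, so this comparison needs a concentration estimate of the form $\p(|X_u|>a)\,\p(|X_{v-u}|\le a/2)\le\p(|X_v|>a/2)$ together with a lower bound on the concentration factor, which again is unavailable here. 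Our Step~2 of part~(ii) avoids this entirely by taking $q_n$ to be a running-supremum probability and bounding the integrand $P(4t,8Rh(4t))\le 4P(t,2Rh(2^{-n}))\le 4q_n$ for $t\le2^{-n-1}$, i.e.\ always passing from small times to large times. Until these directions are repaired, the assertion that the constants $R/4$ and $8R$ ``exactly cover these passages'' is not substantiated; of course, citing Khintchine's original argument remains a legitimate alternative.
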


\begin{remark}
For completeness and accessibility, we give a short elementary proof of Lemma~\ref{lem:Khintchine} in Appendix~\ref{app:khintchine} below. It is based on Khintchine's proof of a closely related result in the Russian text~\cite[Fundamental lemma]{MR0002054}. It is not essential for the results in this paper, but it is natural to enquire whether Lemma~\ref{lem:Khintchine} holds with the constants $R/4$ and $8R$ in the integral conditions substituted by $R$.
\end{remark}

\begin{corollary}
\label{cor:Khintchine}
Suppose $X$ is not compound Poisson with drift. Let $h:(0,\infty)\to(0,\infty)$ be measurable and increasing at $0$. Define the set of $h$-slopes $\mS_h\coloneqq\{\xi_n/h(\ell_n):n\in\N\}$ and set $k_h\coloneqq \sup_{s\in\mS_h}|s|$. Then~$\p(k_h=\infty)\in\{0,1\}$. Moreover, $k_h<\infty$ a.s. if and only if $\limsup_{t\da 0}|X_t|/h(t)<\infty$ a.s.
\end{corollary}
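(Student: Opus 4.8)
The plan is to combine Proposition~\ref{prop:SB-sum} (the $0$--$1$ law for sums over the faces of $C$) with Lemma~\ref{lem:Khintchine} (Khintchine's two-sided upper-function test), translating the event $\{k_h=\infty\}$ into a statement about a sum over the faces and then into an integral criterion that Lemma~\ref{lem:Khintchine} matches with $\limsup_{t\da0}|X_t|/h(t)$. First I would establish the $0$--$1$ law for $k_h$. For a fixed threshold $M>0$, consider $\phi_M(x,t)\coloneqq\1\{|x|/h(t)>M\}$, which is nonnegative and measurable, so by Proposition~\ref{prop:SB-sum} the sum $N_M\coloneqq\sum_{n\in\N}\phi_M(\xi_n,\ell_n)=\#\{n:|\xi_n|/h(\ell_n)>M\}$ is a.s.\ finite or a.s.\ infinite. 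Now $\{k_h>M\}=\{N_M\ge1\}$: if $N_M$ is a.s.\ infinite then $\p(k_h>M)=1$, while if $N_M$ is a.s.\ finite then $\p(k_h\le M')=1$ for some (random but a.s.-finite) $M'\ge M$, hence $\p(k_h<\infty)=1$. Taking $M\to\infty$ along integers, $\{k_h=\infty\}=\bigcap_M\{k_h>M\}$ is a decreasing limit of events each of probability $0$ or $1$, so $\p(k_h=\infty)\in\{0,1\}$. (Alternatively, $k_h$ itself is a measurable functional of the faces and the cited $0$--$1$ law of~\cite[Cor.~3.2]{SmoothCM} applies directly; either route works.)

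For the main equivalence, I would use~\eqref{eq:SB-sum} applied to $\phi_M$: namely $N_M<\infty$ a.s.\ if and only if $\int_0^1\E[\min\{\1\{|X_t|/h(t)>M\},1\}]t^{-1}\D t=\int_0^1\p(|X_t|/h(t)>M)t^{-1}\D t<\infty$. Thus $k_h<\infty$ a.s.\ $\iff$ there exists $M>0$ with $\int_0^1\p(|X_t|/h(t)>M)t^{-1}\D t<\infty$. Now invoke Lemma~\ref{lem:Khintchine}. If this integral is finite for some $M$, apply part (i) with $R=4M$ to get $\limsup_{t\da0}|X_t|/h(t)\le 4M<\infty$ a.s. Conversely, if $\limsup_{t\da0}|X_t|/h(t)<\infty$ a.s., then by Blumenthal's $0$--$1$ law this limsup equals some finite constant $c\ge0$; pick $R>c$, so that $\limsup_{t\da0}|X_t|/h(t)<R$ a.s., and the contrapositive of part (ii) gives $\int_0^1\p(|X_t|/h(t)>8R)t^{-1}\D t<\infty$, i.e.\ the integral criterion holds with $M=8R$. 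This closes the loop and proves $k_h<\infty$ a.s.\ $\iff\limsup_{t\da0}|X_t|/h(t)<\infty$ a.s.

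The only genuine subtlety — and the step I expect to need the most care — is the bookkeeping around the constants $R/4$ and $8R$ in Lemma~\ref{lem:Khintchine}: the lemma does not give a clean iff for a single threshold, so one must run the finiteness criterion at one constant and extract the $\limsup$ bound at a (larger) constant, and vice versa, using that both ``$\int_0^1\p(|X_t|/h(t)>M)t^{-1}\D t<\infty$ for some $M$'' and ``$\limsup_{t\da0}|X_t|/h(t)<\infty$'' are threshold-free (monotone) statements so the constant inflation is harmless. A secondary point is the standing hypothesis of Lemma~\ref{lem:Khintchine} that $X$ is not compound Poisson with drift; since $X$ is assumed of infinite variation throughout (in particular of infinite activity), this is automatic, but I would note it explicitly. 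Everything else is a direct substitution into the two cited results.
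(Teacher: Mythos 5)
Your proposal is correct and follows essentially the same route as the paper: both reduce the question to whether $\int_0^1\p(|X_t|/h(t)>M)t^{-1}\D t$ is finite for some $M$ (you via Proposition~\ref{prop:SB-sum} applied to indicators, the paper via the equivalent \cite[Cor.~3.2]{SmoothCM}), and then transfer this to $\limsup_{t\da0}|X_t|/h(t)$ through Lemma~\ref{lem:Khintchine}, with the same observation that the constant inflation ($R/4$, $8R$) is harmless because both sides are threshold-free statements.
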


\begin{proof}
Suppose there exists $R\in(0,\infty)$ such that $\int_0^1\p(|X_t|/h(t)>R)t^{-1}\D t<\infty$. Then~\cite[Cor.~3.2]{SmoothCM} (applied to $f(t,x)=|x|/h(t)$) implies that $\mS_h\cap(\R\setminus[-R,R])$ is a.s. a finite set and hence $k_h<\infty$ a.s. Similarly, since $\limsup_{t\da 0}|X_t|/h(t)$ is a.s. constant, Lemma~\ref{lem:Khintchine}(i) implies $\limsup_{t\da 0}|X_t|/h(t)\le 4R$. 

Next assume that for all $R\in(0,\infty)$ we have $\int_0^1\p(|X_t|/h(t)>R)t^{-1}\D t=\infty$. Then~\cite[Cor.~3.2]{SmoothCM} (applied to $f(t,x)=|x|/h(t)$) implies that $\mS_h\cap(\R\setminus[-R,R])$ is a.s. an infinite set for any $R>0$. Hence $k_h\ge R$ a.s. for any $R>0$, implying that $k_h=\infty$ a.s. Similarly, Lemma~\ref{lem:Khintchine}(ii) implies $\limsup_{t\da 0}|X_t|/h(t)\ge R/8$ a.s. for any $R>0$ and hence $\limsup_{t\da 0}|X_t|/h(t)=\infty$.

Since $\int_0^1\p(|X_t|/h(t)>R)t^{-1}\D t$ is either finite for some $R$ or infinite for all $R$, it follows that $\p(k_h=\infty)$ is either $0$ or $1$, respectively. Moreover, the former (resp. latter) case implies that $\limsup_{t\da 0}|X_t|/h(t)$ is finite (resp. infinite) a.s.
\end{proof}

\begin{proof}[Proof of Proposition~\ref{prop:not-1/b}]
First note that, for any $p>0$ the sum $\sum_{n\in\N}\ell_n^{p}$ is finite a.s. (with mean $T^p/p$) by~\cite[Thm~3.1]{SmoothCM}. Pick any $r'>r$ and note that $|\xi_n|/\ell_n^r\le k_{r'}\ell_n^{r'-r}$ for every $n\in\N$, implying 
\begin{equation}
\label{eq:r_r'}
K_r^{1/(1-r)}
=\sum_{s\in\mS_r}|s|^{1/(1-r)}
=\sum_{n\in\N}(|\xi_n|/\ell_n^r)^{1/(1-r)}
\le k_{r'}^{1/(1-r)}\sum_{n\in\N}\ell_n^{(r'-r)/(1-r)}.
\end{equation}
In particular, $K_r<\infty$ whenever $k_{r'}<\infty$ for some $r'>r$.

Assume first that $\sigma^2>0$, then~\cite[Prop.~47.11]{MR3185174} yields $\limsup_{t\da 0}|X_t|/\sqrt{t\log\log(1/t)}=\sqrt{2}|\sigma|>0$. Thus, the limit $\lambda_{1/r}=\limsup_{t\da 0}|X_t|/t^r$ equals $0$ (resp. $\infty$) a.s. for $r\in(0,1/2)$ (resp. $r\in[1/2,1)$). Then, by Corollary~\ref{cor:Khintchine}, we have $k_r=\infty$ for all $r\in[1/2,1)$ and $k_{(r+1/2)/2}<\infty$ for $r\in(0,1/2)$ since $(r+1/2)/2<1/2$. In the latter case, $r<(r+1/2)/2$ and hence $K_r<\infty$ by~\eqref{eq:r_r'}. 

Next assume $\sigma^2=0$. By~\cite[Prop.~47.24]{MR3185174}, $\lambda_{1/r}$ equals $0$ (resp. $\infty$) a.s. for $r\in(0,1/\beta)$ (resp. $r\in(1/\beta,1)$) where $\beta$ is the Blumenthal--Getoor index defined in~\eqref{eq:BG}. As before, by Corollary~\ref{cor:Khintchine}, we have $k_r=\infty$ for all $r\in(1/\beta,1)$ and $k_{(r+1/\beta)/2}<\infty$ for $r\in(0,1/\beta)$ since $(r+1/\beta)/2<1/\beta$. In the latter case, $r<(r+1/\beta)/2$ and hence $K_r<\infty$ by~\eqref{eq:r_r'}. 
\end{proof}

Recall that, by Fubini's theorem,
$J_p=\int_{(-1,1)}|x|^p\nu(\D x)=\int_0^1\ov\nu(t^{1/p})\D t-\ov\nu(1)$ for $p>0$.

\begin{proof}[Proof of Proposition~\ref{prop:1/b}]
Let $r=1/\beta\in[1/2,1)$. By~\cite[Thm~2.1]{MR2370602}, $\int_0^1\ov\nu(t^r)\D t$ is finite (resp. infinite) if and only if $\lambda_{1/r}=\limsup_{t\da 0}|X_t|/t^r$ is finite (resp. infinite) a.s. Thus, by Corollary~\ref{cor:Khintchine}, $\int_0^1\ov\nu(t^r)\D t=\infty$ if and only if $k_r=\infty$. By Proposition~\ref{prop:SB-sum} (with $\phi(x,t)=(|x|/t^r)^{1/(1-r)}$): $I_\beta=\int_0^1\E[\min\{|X_t|/t^r,1\}^{1/(1-r)}]t^{-1}\D t$ is finite if and only if $K_r^{1/(1-r)}=\sum_{n\in\N}|\xi_n|^{1/(1-r)}/\ell_n^{r/(1-r)}$ is finite a.s., completing the proof.
\end{proof}

The following elementary result is required to establish Proposition~\ref{prop:suff_1/b}. 

\begin{lemma}
\label{lem:no-gap}
Let $f:(0,1)\to[0,\infty)$ be a non-increasing function. Then the condition $\int_0^1f(x)\D x<\infty$ implies $\int_0^1x^{p-1}f(x)^p\D x<\infty$ for any $p\ge 1$.
\end{lemma}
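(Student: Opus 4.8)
The plan is to bound the weighted integral by an integral of the form $\int F^{p-1}\,\D F$, where $F$ is the primitive of $f$, and then evaluate the latter by the fundamental theorem of calculus. Set $F(x)\coloneqq\int_0^x f(t)\,\D t$ for $x\in[0,1]$; the hypothesis is exactly that $F(1)<\infty$. Note first that $f$, being non-increasing and integrable, is finite throughout $(0,1)$ (if $f(x_0)=\infty$ for some $x_0\in(0,1)$ then $f\equiv\infty$ on $(0,x_0]$, contradicting integrability), and that $F$ is continuous, non-decreasing and absolutely continuous with $F'=f$ Lebesgue-a.e.

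The first step is the elementary monotonicity estimate $xf(x)\le F(x)$, valid for every $x\in(0,1)$: since $f(t)\ge f(x)$ for all $t\in(0,x)$, integrating over $(0,x)$ gives $F(x)\ge xf(x)$. Raising both sides to the power $p-1\ge0$ and multiplying by $f(x)\ge0$ yields, for every $x\in(0,1)$,
\[
x^{p-1}f(x)^p=(xf(x))^{p-1}f(x)\le F(x)^{p-1}f(x).
\]

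The second step is to integrate. Because $u\mapsto u^p/p$ is continuously differentiable on $[0,\infty)$ (with derivative $u^{p-1}$) and $F$ is absolutely continuous, the composition $x\mapsto F(x)^p/p$ is absolutely continuous on $[0,1]$ with derivative $F(x)^{p-1}F'(x)=F(x)^{p-1}f(x)$ a.e.; hence $\int_0^1 F(x)^{p-1}f(x)\,\D x=\big(F(1)^p-F(0)^p\big)/p=F(1)^p/p$. Combining this with the pointwise bound of the first step gives
\[
\int_0^1 x^{p-1}f(x)^p\,\D x\le\frac1p F(1)^p=\frac1p\Big(\int_0^1 f(x)\,\D x\Big)^p<\infty,
\]
which is the claim (the case $p=1$ being the hypothesis itself).

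The only place requiring any care is the use of the fundamental theorem of calculus for $F^p$ in the second step — equivalently, the change of variables $\int_0^1 F^{p-1}\,\D F=F(1)^p/p$. This is routine once one observes that $F$ is absolutely continuous and $u\mapsto u^p$ is $C^1$ on the relevant range $[0,F(1)]$, and that $F(0)=0$ so there is no boundary term; no other subtlety arises.
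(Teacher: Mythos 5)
Your proof is correct, and it takes a genuinely different route from the paper's. The paper proceeds by a dyadic decomposition: it sandwiches $xf(x)$ between the step function $g(x)=\sum_{n}2^{1-n}f(2^{-n})\1_{[2^{-n},2^{1-n})}(x)$ and $2xf(x/2)$, deduces from $\int_0^1 f<\infty$ that the weights $w_n=2^{1-n}f(2^{-n})$ are summable (hence bounded by some $W$), and then bounds $\int_0^1 x^{p-1}f(x)^p\D x\le\sum_n w_n^p\log 2\le W^{p-1}\sum_n w_n\log 2$. You instead use the single pointwise inequality $xf(x)\le F(x)$ with $F(x)=\int_0^x f$, and then compute $\int_0^1 F^{p-1}f\,\D x=F(1)^p/p$ exactly via the chain rule for the absolutely continuous, non-decreasing primitive $F$ composed with the $C^1$ map $u\mapsto u^p/p$. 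Both arguments rest on the same elementary fact that $xf(x)$ is controlled by the running integral of $f$, but yours replaces the discretisation and the $\ell^1\subset\ell^p$ step by an exact integration, yielding the clean quantitative bound $\int_0^1 x^{p-1}f(x)^p\D x\le\frac1p\big(\int_0^1 f(x)\D x\big)^p$ with explicit constant; the paper's version only needs $f$ evaluated along a dyadic grid and so is slightly more robust to replacing the integral by a sum, but is less sharp. Your handling of the one delicate point (the fundamental theorem of calculus for $F^p$) is adequate: $F$ is absolutely continuous with $F'=f$ a.e., $u\mapsto u^p$ is Lipschitz on the compact range $[0,F(1)]$, and $F(0)=0$.
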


\begin{proof}
Observe that, for all $x\in(0,1)$, we have
\begin{align*}
xf(x)
\le g(x)
&\coloneqq\sum_{n\in\N}
    2^{1-n}f(2^{-n})\1_{[2^{-n},2^{1-n})}(x)\\
&=4\sum_{n\in\N}
    (2^{-n}/2)f(2^{1-n}/2)\1_{[2^{-n},2^{1-n})}(x)
\le 4\tfrac{x}{2}f\big(\tfrac{x}{2}\big).
\end{align*}
Thus, defining $w_n\coloneqq 2^{1-n}f(2^{-n})$ for $n\in\N$, we have 
\[
\sum_{n\in\N}w_n\log 2
=\int_0^1 g(x) \frac{\D x}{x}
\le 4\int_0^1
    \tfrac{x}{2}f\big(\tfrac{x}{2}\big)
    \frac{\D x}{x}
= 4\int_0^{1/2}f(x)\D x
<\infty.
\]
In particular, $W\coloneqq\sup_{n\in\N}w_n<\infty$ and hence,
\[
\int_0^1 x^{p-1}f(x)^p\D x
\le\int_0^1 g(x)^p\frac{\D x}{x}
=\sum_{n\in\N}w_n^p\log 2
\le\sum_{n\in\N}w_nW^{p-1}\log 2<\infty.\qedhere
\]
\end{proof}

\begin{proof}[Proof of Proposition~\ref{prop:suff_1/b}]
Recall that $I_\beta<\infty$ implies $J_\beta<\infty$ by Proposition~\ref{prop:1/b}, so it suffices to prove the converse. Define
\[
\varpi(u)\coloneqq\int_{(-1,1)\setminus(-u,u)}x\nu(\D x)\in\R
\quad\text{for $u\in(0,1]$.}
\]
We will show that, under our assumptions, the following integrals are finite:
\begin{gather*}
\mathrm{(i)}\,\int_0^1 t\ov\nu\big(t^{1/\beta}\big)^2\D t<\infty,
\enskip
\mathrm{(ii)}\,\int_0^1 t^{1-2/\beta}\varpi\big(t^{1/\beta}\big)^2\D t<\infty,\\
\mathrm{(iii)}\,\int_0^1\E\big[\min\{|X_t|/t^{1/\beta},1\}^2\big]\frac{\D t}{t}<\infty,
\enskip
{\mathrm{(iv)}}\,\,I_\beta=\int_0^1\E\big[\min\{|X_t|/t^{1/\beta},1\}^{\beta/(\beta-1)}\big]\frac{\D t}{t}<\infty.
\end{gather*}

Let $r=1/\beta$ and note that $\int_0^1\ov\nu(t^r)\D t<\infty$ by assumption. Thus, Lemma~\ref{lem:no-gap} (with $f(x)=\ov\nu(x^r)$) gives (i) $\int_0^1t\ov\nu(t^r)^2\D t<\infty$. Since $1/(1-r)=\beta/(\beta-1)\ge 2$ and $\min\{|x|,1\}^p\le\min\{|x|,1\}^q$ for $p\ge q$, (iii) implies (iv). It remains to show that (ii) and (iii) hold.

Let us establish (ii) $\int_0^1t^{1-2r}\varpi(t^r)^2\D t<\infty$. Denote $\ov\nu_1(x)\coloneqq\ov\nu(x)-\ov\nu(1)$ for $x\in(0,1]$. By Fubini's theorem, we have 
\begin{equation*}
|\varpi(u)|
\le\int_{(-1,1)}\1_{\{u\le |x|<1\}}\int_0^{|x|}\D y\nu(\D x)
=\int_0^1\ov\nu_1(\max\{y,u\})\D y
=u\ov\nu_1(u) + \int_u^1\ov\nu_1(y)\D y.
\end{equation*}
Hence, the elementary inequality $(a+b)^2\le 2(a^2+b^2)$ yields
\[
\frac{1}{2}t^{1-2r}\varpi(t^r)^2
\le t\ov\nu(t^r)^2
    + t^{1-2r}\bigg(\int_{t^r}^1\ov\nu(y)\D y\bigg)^2,
\quad t\in(0,1].
\]
Since (i) $\int_0^1t\ov\nu(t^r)^2\D t<\infty$, to establish (ii) we need only show that $\int_0^1t^{1-2r}(\int_{t^r}^1\ov\nu(y)\D y)^2\D t$ is finite. Since $\min\{a,b\}^2\le ab$ and $r=1/\beta<1$, Fubini's theorem gives
\begin{align*}
2(1-r)\int_0^1t^{1-2r}\bigg(\int_{t^r}^1\ov\nu(y)\D y\bigg)^2\D t
& =2(1-r)\int_0^1\int_0^1\bigg(\int_0^{\min\{x,y\}^{1/r}}t^{1-2r}\D t \bigg)\ov\nu(x)\ov\nu(y)\D x\D y\\
& =\int_0^1\int_0^1 \min\{x,y\}^{2/r-2} \ov\nu(x)\ov\nu(y)\D x\D y\\
& \le\bigg(\int_0^1x^{1/r-1}\ov\nu(x)\D x\bigg)^2
=\bigg(r\int_0^1\ov\nu(t^r)\D t\bigg)^2<\infty.
\end{align*}

It remains to establish (iii) $\int_0^1\E[\min\{|X_t|/t^r,1\}^2]t^{-1}\D t<\infty$.
Let $\gamma$ be the drift parameter of $X$ corresponding to the cutoff function $x\mapsto\1_{\{|x|<1\}}$ (see~\cite[Def.~8.2]{MR3185174}) and recall $\ov\sigma^2(u) 
=\int_{(-u,u)}x^2\nu(\D x)$ for $u>0$.  Apply~\cite[Lem.~A.1]{HowSmoothCM} (with $\ve=K=t^r$) to obtain \[
\int_0^1\E\big[\min\{|X_{t}|,t^r\}^{2}\big]\frac{\D t}{t^{1+2r}}\le \int_0^1 \big[t^2(\gamma-\varpi(t^r))^2  + t\ov\sigma^2(t^r) + t^{2r+1}\ov\nu(t^r)\big]\frac{\D t}{t^{1+2r}}. 
\]
Since the integrals $\int_0^1\gamma^2t^{1-2r}\D t=\gamma^2/(2-2r)$, $\int_0^1t^{1-2r}\varpi(t^r)^2\D t$ and $\int_0^1\ov\nu(t^r)\D t$ are all finite (recall $r=1/\beta<1$), it remains to show that $\int_0^1t^{-2r}\ov\sigma^2(t^r)\D t<\infty$. 

By Fubini's theorem, we obtain
\[
\ov\sigma^2(x)
=\int_{(-x,x)} \bigg(\int_0^{|u|} 2y\D y\bigg)\nu(\D u)
=2\int_0^x y(\ov\nu(y)-\ov\nu(x))\D y
\le 2\int_0^x y\ov\nu(y)\D y,
\quad x\in(0,1].
\]
Consider the case $r=1/\beta\in(1/2,1)$. Fubini's theorem gives
\[
\int_0^1t^{-2r}\ov\sigma^2(t^r)\D t
\le 2\int_0^1\int_0^{t^r} t^{-2r}y\ov\nu(y)\D y\D t
= 2\int_0^1\frac{y^{1/r-2}-1}{2r-1}y\ov\nu(y)\D y,
\]
which is finite since $\int_0^1y^{1/r-1}\ov\nu(y)\D y=r\int_0^1\ov\nu(t^r)\D t<\infty$. Now consider the case $r=1/\beta=1/2$. Again by Fubini's theorem, we obtain
\begin{align*}
\int_0^1t^{-1}\ov\sigma^2(\sqrt{t})\D t
&\le 2\int_0^1\int_0^{\sqrt{t}} t^{-1}y\ov\nu(y)\D y\D t\\
&= 4\int_0^1\log(1/y)y\ov\nu(y)\D y
= \int_0^1\log(1/x)\ov\nu(\sqrt{x})\D x<\infty.\qedhere
\end{align*}
\end{proof}

\begin{proof}[Proof of Proposition~\ref{prop:b=2}]
Recall that $\lambda_2=\limsup_{t\da 0}|X_t|/\sqrt{t}\in[0,\infty]$ by~\cite[Thm~2.1]{MR2370602}. If $\lambda_2=\infty$, then $k_{1/2}=\infty$ by Corollary~\ref{cor:Khintchine}. If $\lambda_2\in[0,\infty)$ then $k_{1/2}<\infty$ by Corollary~\ref{cor:Khintchine}. Finally, assume $\lambda_2\in(0,\infty)$. Then $\int_0^1t^{-1}\p(|X_t|/\sqrt{t}>R)\D t=\infty$ 
for $R<\lambda_2/4$ by Lemma~\ref{lem:Khintchine}(i). Thus, for any $\ve\in(0,\lambda_2/4)$, $\mS_{1/2}$ has infinitely many points with magnitude on the interval $[\ve,\infty)$ by~\cite[Cor.~3.2]{SmoothCM}, 
implying $K_{1/2}=\infty$.
\end{proof}

\section{Concluding remarks}
\label{sec:conclusion}

It is natural to consider the question of whether the convex minorant $C$ is $h$-H\"older continuous, i.e., if $\sup_{0\le u<t\le T}|C_t-C_u|/h(t-u)<\infty$, for an appropriate general concave increasing function $h:(0,\infty)\to(0,\infty)$. In this context, it is also easy to see that 
\[
\sup_{0\le u<t\le T}\frac{|C_t-C_u|}{h(t-u)}
\ge k_h=\sup_{n\in\N}\frac{|\xi_n|}{h(\ell_n)}
=\sup_{s\in\mS_h}|s|,
\] 
where the finiteness of $k_h$ can be completely characterised via Corollary~\ref{cor:Khintchine} and part~(a) of the main theorem in~\cite{MR968135} in terms of the L\'evy measure $\nu$ (see  Corollary~\ref{cor:Kim_Wee_Khintchine} in Subsection~\ref{subsec:k_h} below for details). 

It is not, however, immediately clear how to construct a tractable upper bound, say $K_h$, satisfying $K_h<\infty$ whenever $k_h<\infty$. Indeed, a crucial step in proving Lemma~\ref{lem:PC-Holder} (and hence~\eqref{eq:Holder-K_r}) is the application of H\"older's inequality to establish that the $r$-H\"older constant of $C$ is bounded by the $L^p$-norm of the derivative $C'$ for $p=1/(1-r)$. This step is not easily extendable to a general concave function $h$ since there is no sufficiently sharp extension of H\"older's inequality (see, e.g.~\cite{MR1436394,MR2489348}). Thus, it appears that a generalisation of our results beyond the case where $h$ is a power function would require analysing the integral $\int_u^t|C'_v|\D v$ for all $0\le u<t\le T$ by other means. For instance, the results in~\cite{HowSmoothCM} obtain upper and lower functions for $|C'|$ at $0$ and $T$, yielding upper and lower bounds on $\int_u^t|C'_v|\D v$ for $u<t$ close to either $0$ or $T$. Note however, that there may exist a large gap between the upper and lower functions of $C'$, see~\cite[Rem.~2.14(a)]{HowSmoothCM}, showing that this question is nontrivial. 


\subsection{When is \texorpdfstring{$k_h$}{kh} finite?}
\label{subsec:k_h}

The following corollary is a direct consequence of Corollary~\ref{cor:Khintchine} and part (a) of the main theorem in~\cite{MR968135}. Recall the definition of $\ov\nu$ and $\varpi$ in~\eqref{eq:ov_nu} and let $\gamma$ be the drift parameter of $X$ (for the cutoff function $x\mapsto\1_{(-1,1)}(x)$, see~\cite[Def.~8.2]{MR3185174}) and $\ov\sigma^2(u)=\int_{(-u,u)}x^2\nu(\D x)$ for $u>0$.

\begin{corollary}
\label{cor:Kim_Wee_Khintchine}
Suppose $X$ is not compound Poisson with drift. Then, for any function $h$ increasing at $0$ with $h(0)=0$, the variable $k_h<\infty$ a.s. (resp. $k_h=\infty$ a.s.) if and only if $\limsup_{t\downarrow0}|X_t|/h(t)$ is a.s. finite (resp. infinite). Moreover, the following statements hold.
\begin{enumerate}
\item[{\nf{(i)}}] If $\sigma^2>0$, then $k_h<\infty\text{ a.s.}$ if and only if $\liminf_{t\da 0}h(t)/\sqrt{t\log\log(1/t)}>0\text{ a.s.}$
\item[{\nf{(ii)}}] If $\sigma^2=0$ and $\limsup_{x \da 0} (x^{-2}\ov\sigma^2(x)+x^{-1}|\gamma-\varpi(x)|)/\ov \nu(x)<\infty$, then the random variable $k_h<\infty\text{ a.s.}$ if and only if $\int_0^1 \ov\nu(h(t))\D t<\infty$. 
\item[{\nf{(iii)}}] Suppose $\sigma^2=0$ and $\limsup_{x \da 0} (x^{-2}\ov\sigma^2(x)+x^{-1}|\gamma-\varpi(x)|)/\ov \nu(x)=\infty$. Then there exists a non-decreasing function $h^*$ such that $\limsup_{t\da0}|X_t|/h^*(t)\in(0,\infty)$ a.s. ($h^*$ constructed in the paragraph below). Moreover, the following implications hold
\begin{gather*}
\limsup_{t\da0} h^*(t)/h(t)<\infty
\enskip\implies\enskip
\text{$k_h<\infty$ a.s.}\\
\liminf_{t\da0} h^*(t)/h(t)=\infty
\enskip\implies\enskip
\text{$k_h=\infty$ a.s.}
\end{gather*}
\end{enumerate}
\end{corollary}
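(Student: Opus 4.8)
The plan is to prove Corollary~\ref{cor:Kim_Wee_Khintchine} by combining the Khintchine-type dichotomy already established in Corollary~\ref{cor:Khintchine} with the classification of upper functions $\limsup_{t\da0}|X_t|/h(t)$ from part (a) of the main theorem in~\cite{MR968135}. The first statement---the equivalence of $k_h<\infty$ a.s. with $\limsup_{t\da0}|X_t|/h(t)<\infty$ a.s.---is immediate from Corollary~\ref{cor:Khintchine} applied to the given $h$ (note $h(0)=0$ and $h$ increasing at $0$ is exactly the hypothesis there), so the work lies entirely in translating the cited integral tests for the finiteness of $\limsup_{t\da0}|X_t|/h(t)$ into the three cases.

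For item (i), I would invoke the law of the iterated logarithm for the Gaussian component: when $\sigma^2>0$, \cite[Prop.~47.11]{MR3185174} (already used in the proof of Proposition~\ref{prop:not-1/b}) gives $\limsup_{t\da0}|X_t|/\sqrt{t\log\log(1/t)}=\sqrt2\,|\sigma|\in(0,\infty)$. Hence $\limsup_{t\da0}|X_t|/h(t)<\infty$ iff $h$ dominates $\sqrt{t\log\log(1/t)}$ up to a constant near $0$, i.e. iff $\liminf_{t\da0}h(t)/\sqrt{t\log\log(1/t)}>0$; feeding this into the first statement yields (i). For items (ii) and (iii), the engine is the dichotomy in~\cite{MR968135}: in the ``balanced'' regime where $\limsup_{x\da0}(x^{-2}\ov\sigma^2(x)+x^{-1}|\gamma-\varpi(x)|)/\ov\nu(x)<\infty$, the jump part dominates and $\limsup_{t\da0}|X_t|/h(t)$ is finite iff $\int_0^1\ov\nu(h(t))\,\D t<\infty$ (this is precisely the integral test of~\cite{MR968135} part (a) specialised to this regime, analogous to how \cite[Thm~2.1]{MR2370602} enters the proof of Proposition~\ref{prop:1/b}); combined with the first statement this is (ii). In the complementary regime, \cite{MR968135} constructs an explicit non-decreasing $h^*$ (the function I would describe in the ``paragraph below'' referenced in the statement, built from $\ov\sigma^2$, $\gamma-\varpi$ and $\ov\nu$) with $\limsup_{t\da0}|X_t|/h^*(t)\in(0,\infty)$ a.s.; then $\limsup_{t\da0}|X_t|/h(t)\le\limsup_{t\da0}(h^*(t)/h(t))\cdot\limsup_{t\da0}|X_t|/h^*(t)$, so $\limsup_{t\da0}h^*(t)/h(t)<\infty$ forces finiteness, hence $k_h<\infty$ a.s., and symmetrically $\liminf_{t\da0}h^*(t)/h(t)=\infty$ forces $k_h=\infty$ a.s., giving (iii).

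The main obstacle is bookkeeping rather than conceptual: I must verify that the hypotheses of~\cite{MR968135} match ours exactly---in particular that ``$X$ not compound Poisson with drift'' is enough to legitimately apply both Corollary~\ref{cor:Khintchine} (which requires it) and the classification in~\cite{MR968135} (which may be stated under a mild non-degeneracy condition), and that the trichotomy of regimes used there (via the ratio $\limsup_{x\da0}(x^{-2}\ov\sigma^2(x)+x^{-1}|\gamma-\varpi(x)|)/\ov\nu(x)$ being $0$, positive, or $\infty$) coincides with the two-case split in items (ii)--(iii) plus the separate Gaussian case (i). A subtle point is that in case (iii), monotonicity of $h^*$ and the product bound on $\limsup$ only give one-directional implications (not an ``if and only if''), so I must be careful to state and prove only the implications claimed, using that $\limsup(ab)\le(\limsup a)(\limsup b)$ for nonnegative quantities and the symmetric lower bound with $\liminf$. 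Once the regime identification and hypothesis compatibility are pinned down, each of (i), (ii), (iii) follows in a line or two from the first statement of the corollary.
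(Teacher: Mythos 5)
Your overall strategy is the paper's: the main equivalence is exactly Corollary~\ref{cor:Khintchine}, and parts (ii)--(iii) are obtained, as in the paper, by quoting part (a) of the main theorem of~\cite{MR968135} (the existence and construction of $h^*$, and the two one-sided comparison implications via $\limsup(a_tb_t)\le(\limsup a_t)(\limsup b_t)$ and the symmetric $\liminf$ bound, are handled correctly). The one place you deviate from the paper is part (i), and that is where there is a genuine gap.

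For (i) you claim that $\limsup_{t\da0}|X_t|/g(t)=\sqrt{2}|\sigma|\in(0,\infty)$ with $g(t)=\sqrt{t\log\log(1/t)}$ yields the equivalence ``$\limsup_{t\da0}|X_t|/h(t)<\infty\iff\liminf_{t\da0}h(t)/g(t)>0$''. The ``if'' direction is immediate, but the ``only if'' direction does not follow from the law of the iterated logarithm alone: if $\liminf_{t\da0}h(t)/g(t)=0$, the (deterministic) times $t_n$ at which $h(t_n)/g(t_n)\to0$ need not be times at which the (random) path realises its LIL behaviour, so one cannot conclude $\limsup_{t\da0}|X_{t}|/h(t)=\infty$ by evaluating along $t_n$; indeed $|X_{t_n}|/g(t_n)=(|X_{t_n}|/\sqrt{t_n})/\sqrt{\log\log(1/t_n)}\to0$ in probability, and if $t_n$ decreases fast enough this can kill the factor gained from $h(t_n)\le g(t_n)/n$. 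Note also that your own part (iii) only delivers divergence under the much stronger hypothesis $\liminf_{t\da0}h^*(t)/h(t)=\infty$ (i.e.\ $h/h^*\to0$), whereas (i) asserts divergence under the weaker hypothesis $\liminf_{t\da0}h(t)/g(t)=0$; so the comparison principle you use in (iii) cannot be what powers (i). Establishing the divergence half of (i) genuinely requires an integral test --- either part (a) of the main theorem in~\cite{MR968135} for the case $\sigma^2>0$ (which is what the paper cites and which exploits the monotonicity of $h$), or Lemma~\ref{lem:Khintchine}(ii) combined with Gaussian-type lower bounds on $\p(|X_t|>Rh(t))$ over the whole interval $(0,t_n]$. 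Replacing your LIL argument in (i) by the corresponding citation of~\cite{MR968135} closes the gap and brings the proof in line with the paper's.
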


Wee and Kim~\cite{MR968135} proved that  $\limsup_{t \da 0}|X_t|/h^*(t)\in (0,\infty)$ a.s. for a non-decreasing function $h^*$ if and only if $\sigma^2=0$ and $\liminf_{x \da 0} \ov \nu(x)/(\ov\nu(x)+x^{-2}\ov\sigma^2(x)+x^{-1}|\gamma-\varpi(x)|)=0$. In the following two cases, which are exhaustive by~\cite[Lem.~3.3]{MR968135}, we describe a construction of the function $h^*$, implicitly given in the proof of~\cite[Thm~3.4]{MR968135}. 

\begin{enumerate}
    \item[\nf{(a)}] Suppose $\liminf_{x \da 0}(\ov\nu(x)+x^{-1}|\gamma-\varpi(x)|)/(x^{-2}\ov\sigma^2(x))=0$. Choose a sequence $u_n\da 0$, such that $u_{n+1}^{-2}\ov\sigma^2(u_{n+1})>2u_n^{-2}\ov\sigma^2(u_n)$ for $n\in\N$ and $\sum_{n \in \N}\log(n)(\ov\nu(u_n)+u_n^{-1}|\gamma-\varpi(u_n)|)/(u_n^{-2}\ov\sigma^2(u_n))<\infty$. Let $t_n=\log(n)/(u_n^{-2}\ov\sigma^2(u_n))$ for all $n \in \N$, and define $h^*(t)\coloneqq u_n\log(n)$ for $t_{n+1}<t\le t_n$ and $n \in \N$.
    \item[\nf{(b)}] Suppose $\liminf_{x \da 0} (\ov\nu(x)+x^{-2}\ov\sigma^2(x))/(x^{-1}|\gamma-\varpi(x)|)=0$. Choose a sequence $u_n\da 0$, such that $u_{n+1}^{-1}|\gamma-\varpi(u_{n+1})|\ge 2u_n^{-1}|\gamma-\varpi(u_n)|$ for $n \in \N$ and  $\sum_{n \in \N}(\ov\nu(u_n)+u_n^{-2}\ov\sigma^2(u_n))/(u_n^{-1}|\gamma-\varpi(u_n)|)<\infty$. Let $t_n=1/(u_n^{-1}|\gamma-\varpi(u_n)|)$ and define $h^*(t)\coloneqq u_n$ for $t_{n+1}<t\le t_n$ and $n \in \N$.
\end{enumerate}

\appendix
\section{Proof of Lemma~\ref{lem:Khintchine}}
\label{app:khintchine}

We present a short proof of Lemma~\ref{lem:Khintchine}, based on the proof of~\cite[Fundamental lemma]{MR0002054}. 

\begin{proof}[Proof of Lemma~\ref{lem:Khintchine}]
Fix $0<s<t$ and $0<y<x$, then  $\{|X_t|\ge x\}\subset\{|X_s|\ge y\}\cup\{|X_t-X_s|\ge x-y\}$. Since $X_t-X_s\eqd X_{t-s}$, this yields 
\begin{equation}\label{eq:Kh_mid} 
\p(|X_t|\ge x)
\le\p(|X_s|\ge y)+\p(|X_{t-s}|\ge x-y).
\end{equation}
In particular, taking $s=t/2$ and $y=x/2$ gives
$P(t,x)\coloneqq\p(|X_t|\ge x)\le 2P(t/2,x/2)$. Without loss of generality, we assume throughout that $h$ is non-decreasing on $(0,1]$.

\emph{Part (i).} 
It suffices to show that, given $R>0$, the condition $\int_0^1 P(t,Rh(t))t^{-1}\D t<\infty$ implies $\limsup_{t\da 0}|X_t|/h(t)\le 4R$ a.s. The proof is split in three steps.

\underline{Step 1.} We first show that $P(t,2Rh(t))\to 0$ as $t\da 0$ and, in particular, there exists some $\ve>0$ such that $P(t,2Rh(t))<1/2$ for $t\in(0,\ve)$. Denote $p(t)=P(t,Rh(t))$ for $t>0$. Since $h$ is non-decreasing, \eqref{eq:Kh_mid} implies $P(t,2Rh(t))\le p(s)+p(t-s)$. Integrating the previous inequality over $[t/2,t]$ with respect to the measure $s^{-1}\D s$ yields 
\begin{align*}
P(t,2Rh(t))\log 2
& \le \int_{t/2}^t p(s)\frac{\D s}{s}
+\int_{t/2}^t p(t-s)\frac{\D s}{s}\\
& \le \int_{t/2}^t p(s)\frac{\D s}{s}
+\int_{t/2}^t p(t-s)\frac{\D s}{t-s}
= \int_0^t p(s)\frac{\D s}{s}
<\infty.
\end{align*}
Thus, the limit $\lim_{t\da 0}\int_0^t p(s)s^{-1}\D s=0$ implies  $\lim_{t\da 0}P(t,2Rh(t))=0$. 

\underline{Step 2.} Define $\ov{X}_t\coloneqq \sup_{s\in[0,t]}X_s$ for $t\ge 0$. We will show that $\p(\ov X_t> 4Rh(t))\le 2\p(X_t> 2Rh(t))$ for $t\in(0,\ve)$ where $\ve$ is as in Step 1. Fix $n\in\N$, set $t_{k}\coloneqq tk/n$ for $k\in\{1,\ldots,n\}$ and define the events
\[
A_k:=\{X_{t_i}\le 4Rh(t)\text{ for all }i\in\{1,\ldots,k-1\}\}\cap\{X_{t_k}> 4Rh(t)\},
\quad k\in\{1,\ldots,n\}.
\]
Since the increments of $X$ are independent and stationary, we have 
\begin{align*}
\p(X_t> 2Rh(t)|A_k)
&\ge \p(X_t-X_{t_k}>-2Rh(t)|A_k)
= \p(X_t-X_{t_k}>-2Rh(t))\\
&\ge \p(|X_t-X_{t_k}|< 2Rh(t))
= \p(|X_{t-t_k}|< 2Rh(t))\\
& \ge \p(|X_{t-t_k}|< 2Rh(t-t_k)).
\end{align*}
By step 1, $t-t_k<\ve$ for $t\in(0,\ve)$ and hence $\p(X_t> 2Rh(t)|A_k)>1/2$ for $k\in\{1,\ldots,n\}$. 

Define $M^{(n)}_t\coloneqq\max_{1\le k\le n}X_{t_k}$, then $\{M_t^{(n)}> 4Rh(t)\}=\bigcup_{k=1}^n A_k$. Since the sets $A_k$ are disjoint, for any
$t\in(0,\ve)$ we have 
\[
\p\big(M_t^{(n)}> 4Rh(t)\big)
=\sum_{k=1}^n\p(A_k)
\le 2\sum_{k=1}^n \p(A_k)\p(X_t> 2Rh(t)|A_k)
\le 2\p(X_t> 2Rh(t)).
\]
Since $X$ is~\cadlag, $M^{(2^n)}_t\ua \ov X_t$ a.s. as $n\to\infty$. Hence, the monotone convergence theorem yields $\p(\ov X_t> 4Rh(t))=\lim_{n\to\infty}\p(M^{(2^n)}_t> 4Rh(t))\le 2\p(X_t> 2Rh(t))$. 

\underline{Step 3.} Define $p_n 
\coloneqq \p(\sup_{t\in[2^{-n}, 2^{1-n}]}(X_t/h(t))>4R)$ for $n\in\N$ and let $n_\ve$ be the smallest positive integer larger than $1+\log(1/\ve)/\log 2$, where $\ve$ is as in Step 1. Since $h$ is non-decreasing, Step 2 and~\eqref{eq:Kh_mid} imply that for all $n\ge n_\ve$ and $t\in[2^{-n},2^{1-n}]$ we have 
\begin{multline*}
p_n
\le \p\bigg(\sup_{t\in[2^{-n}, 2^{1-n}]}X_t> 4R h(2^{-n})\bigg)
= \p\big(\ov X_{2^{-n}}> 4R h(2^{-n})\big)
\le \p\big(\ov X_{t}> 4R h(2^{-n})\big)\\
\le 2\p(X_{t}> 2R h(2^{-n}))
\le 2\p(X_{t}> 2R h(t/2))
\le 4\p(X_{t/2}> R h(t/2)).
\end{multline*}
Integrating the previous inequality over $t\in[2^{-n},2^{1-n}]$ and summing over $n\ge n_\ve$ gives 
\begin{align*}
\sum_{n=n_\ve}^\infty p_n \frac{\log 2}{4} &
=\sum_{n=n_\ve}^\infty \int_{2^{-n}}^{2^{1-n}}\frac{p_n}{4} \frac{\D t}{t}
\le\sum_{n=1}^\infty \int_{2^{-n}}^{2^{1-n}}\p(X_{t/2}> R h(t/2)) \frac{\D t}{t}\\
& = \int_{0}^{2}\p(X_{t}> R h(t)) \frac{\D t}{t}<\infty.
\end{align*}
The Borel--Cantelli lemma implies  $\sup_{t\in[2^{-m-1},2^{-m}]}(X_t/h(t))\le 4R$ for all but finitely many $n$, implying $\limsup_{t\to 0}X_t/h(t)\le 4R$ a.s. By symmetry, $\limsup_{t\to 0}(-X_t)/h(t)\le 4R$ a.s., proving part (i). 

\emph{Part (ii).}
It suffices to show that, given $R>0$, if $\int_0^1P(t,8Rh(t))t^{-1}\D t=\infty$ (recall $P(t,x)=\p(|X_t|>x)$) then $\limsup_{t\da 0}|X_t|/h(t)\ge R$ a.s. The proof requires three steps.

\underline{Step 1.} Define $M(t)\coloneqq\sup_{s\in(0,t]}(|X_s|/h(s))$. We will show that 
\begin{equation}\label{eq11}
B_n\coloneqq\bigg\{
\sup_{t\in[2^{-n-1},2^{-n}]}
    |X_t-X_{2^{-n-1}}| > 2Rh(2^{-n}),\, M(2^{-n-1})\leq R\bigg\}
\subset\{M(2^{-n})> R\}.
\end{equation}
To see~\eqref{eq11} note that, on the event $B_n$ there exists some $t\in [2^{-n-1},2^{-n}]$ satisfying
\[
M(2^{-n})
\ge |X_t| 
\ge |X_t-X_{2^{-n-1}}|-|X_{2^{-n-1}}|
>2Rh(2^{-n})-Rh(2^{-n-1})
\ge Rh(2^{-n})
\ge Rh(t).
\]

\underline{Step 2.} We claim $\sum_{n\in\N}q_n=\infty$, where $q_n\coloneqq \p(\sup_{t\in[0,2^{-n-1}]}|X_t|>2Rh(2^{-n}))$. For $t \le 2^{-n-1}$, apply~\eqref{eq:Kh_mid} twice to get $4q_n
\ge 4P(t,2Rh(2^{-n}))
\ge P(4t,8Rh(2^{-n}))$. Hence, for any $t \in [2^{-n-2},2^{-n-1}]$, we have $4q_n \ge P(4t,8Rh(4t))$.
Integrating the previous inequality on $[2^{-n-2},2^{-n-1}]$ with respect to $t^{-1}\D t$ yields
\begin{equation*}
(4\log 2)q_n  
\ge \int_{2^{-n-2}}^{2^{-n-1}} P(4t,8Rh(4t))\frac{\D t}{t} 
=\int_{2^{-n}}^{2^{-n+1}} P(t,8Rh(t)) \frac{\D t}{t}, 
\quad \text{for all }n \in \N.
\end{equation*}
Thus, the condition $\int_0^1 P(t,8Rh(4t))t^{-1}\D t=\infty$ implies $\sum_{n\in \N} q_n=\infty$.

\underline{Step 3.} Define $r_n:=\p(M(2^{-n})>R)$ for $n\in \N\cup \{0\}$. By Step 1, the event $B_n\subset\{M(2^{-n-1})\le R\}$ in~\eqref{eq11} satisfies
\[
q_n(1-r_{n+1})=\p(B_n)
\le\p(M(2^{-n-1})\le R,\, M(2^{-n})>R)
=r_n-r_{n+1}.
\]
This further implies that, for any $k\ge0$ and $n\in\N$,
\begin{equation*}
0\le 1-r_n
\le (1-r_{n+1})(1-q_n) 
\le (1-r_{n+k+1})\prod_{i=n}^{n+k}(1-q_i).
\end{equation*}
Since $\sum_{n\in \N} q_n=\infty$, it follows that $\prod_{i=n}^{n+k}(1-q_i) \to 0$ as $k \to \infty$. Indeed, if $q_\infty\coloneqq\limsup_{k\to\infty}q_k>0$, then the limit is obvious since $1-q_i\le 1-q_\infty/2<1$ for infinitely many $i\ge n$ and, if $q_\infty=0$, the result follows from~\cite[Lem.~5.8]{MR1876169}. Hence, we have $r_n=1$ for all $n \in \N\cup\{0\}$ and thus $\limsup_{t \da 0} |X_t|/h(t)=\lim_{t\da 0}M(t)\ge R$ a.s. 
\end{proof}

\printbibliography

\section*{Acknowledgements}

\thanks{
\noindent JGC and AM are supported by the EPSRC grant EP/V009478/1 and The Alan Turing Institute under the EPSRC grant EP/N510129/1; 
AM was also supported by the EPSRC grant 
EP/W006227/1 and
the Turing Fellowship funded by the Programme on Data-Centric Engineering of Lloyd's Register Foundation; DKB is funded by the CDT in Mathematics and Statistics at The University of Warwick. All three authors would like to thank the Isaac Newton Institute for Mathematical Sciences in Cambridge, supported by EPSRC grant  EP/R014604/1, for hospitality during the programme on Fractional Differential Equations where part of this work was undertaken. We also want to thank the anonymous referees whose questions led us to complete the characterisation in the case $\beta\in(1,2)$.
}

\end{document}